\tikzset{every scope/.style={>=angle 60,thick}}
 \def\galoisskip{.8mm}
 \def\galoislength{7mm} 	
 \def\mprrskip{1mm}     	
 \def\longgaloislength{12mm} 
 \def\arrowlength{6mm}
 \def\arrowhight{2pt} 		
 \def\shortarrowlength{4.75mm}
 \def\longarrowlength{10mm}
 \def\horizontalskip{3pt}	
 \def\nodeposition{.5}		
\def\wmpr#1{\smash{\mathop{\hbox to 12pt{\rightarrowfill}}\limits^{#1}}}
\def\mpr#1{
\hspace{\horizontalskip}\begin{tikzpicture}[baseline=-1pt]
	\draw[->] (0,\arrowhight) -- node[above,pos=\nodeposition]{${\scriptstyle #1}$} (\arrowlength,\arrowhight);
\end{tikzpicture}
\hspace{\horizontalskip}}
\def\ir{
\hspace{\horizontalskip}\begin{tikzpicture}[baseline=-1pt]
	\draw[->] (0,\arrowhight) -- (\arrowlength,\arrowhight);
\end{tikzpicture}
\hspace{\horizontalskip}}
\def\sir{
\hspace{\horizontalskip}\begin{tikzpicture}[baseline=-1pt]
    \draw[->] (0,\arrowhight) -- (\shortarrowlength,\arrowhight);
\end{tikzpicture} \hspace{\horizontalskip}}
\def\lmpr#1{
\hspace{\horizontalskip}\begin{tikzpicture}[baseline=-1pt]
    \draw[->] (0,\arrowhight) -- node[above,pos=\nodeposition]{${\scriptstyle #1}$} (\longarrowlength,\arrowhight);
\end{tikzpicture} \hspace{\horizontalskip}}
\def\smrp#1{\hspace{\horizontalskip}\smash{\mathop{\hbox to 12pt{\rightarrowfill}}\limits_{#1}}\hspace{\horizontalskip}} 
\def\smrp#1{\hspace{\horizontalskip}\smash{\mathop{\hbox to 12pt{\rightarrowfill}}\limits_{#1}}\hspace{\horizontalskip}}
\def\lmprr#1#2{\begin{tikzpicture}[baseline=-4pt]
\hspace{\horizontalskip}
\begin{scope}
\draw[->] (0,\mprrskip) -- node[above,pos=\nodeposition]{${\scriptstyle #1}$} (\longgaloislength,\mprrskip);
\draw[->] (0,-\mprrskip) -- node[below,pos=\nodeposition]{${\scriptstyle #2}$} (\longgaloislength,-\mprrskip);
\end{scope}
\end{tikzpicture}
\hspace{\horizontalskip}}
\def\elmap#1{
\hspace{\horizontalskip}\begin{tikzpicture}[baseline=-1pt]
\draw[|->] (0,\arrowhight) -- node[above,pos=\nodeposition]{${\scriptstyle #1}$} (\arrowlength,\arrowhight);
\end{tikzpicture}
\hspace{\horizontalskip}}
\def\selmap#1{
\hspace{\horizontalskip}\begin{tikzpicture}[baseline=-1pt]
\draw[|->] (0,\arrowhight) -- node[above,pos=\nodeposition]{${\scriptstyle #1}$} (\shortarrowlength,\arrowhight);
\end{tikzpicture}
\hspace{\horizontalskip}}
\def\ellmap#1{
\hspace{\horizontalskip}
\begin{tikzpicture}[baseline=-1pt]
\draw[<-|] (0,\arrowhight) -- node[above,pos=\nodeposition]{${\scriptstyle #1}$} (\arrowlength,\arrowhight);
\end{tikzpicture}
\hspace{\horizontalskip}}
\def\elmapt#1{
\hspace{\horizontalskip}
\begin{tikzpicture}[baseline=-1pt]
\draw[|->] (0,\arrowhight) -- node[above,pos=\nodeposition]{${\scriptstyle #1}$} (\arrowlength,\arrowhight);
\end{tikzpicture}
\hspace{\horizontalskip}}
\def\galois#1#2{
\hspace{\horizontalskip}
\begin{tikzpicture}[baseline=-4pt]
\begin{scope}
\draw[->] (0,\galoisskip) -- node[above,pos=.5]{${\scriptstyle #1}$} (\galoislength,\galoisskip);
\draw[<-] (0,-\galoisskip) -- node[below,pos=.6]{${\scriptstyle #2}$} (\galoislength,-\galoisskip);
\end{scope}
\end{tikzpicture}
\hspace{\horizontalskip}}
\def\lgalois#1#2{
\hspace{\horizontalskip}
\begin{tikzpicture}[baseline=-4pt]
\begin{scope}
\draw[->] (0,\galoisskip) -- node[above,pos=.5]{${\scriptstyle #1}$} (\longgaloislength,\galoisskip);
\draw[<-] (0,-\galoisskip) -- node[below,pos=.5]{${\scriptstyle #2}$} (\longgaloislength,-\galoisskip);
\end{scope}
\end{tikzpicture}
\hspace{\horizontalskip}}
\definecolor{DarkBlue} 		{rgb} 	{0, 0, 0.545}
\definecolor{DodgerBlue} 	{rgb} 	{0.117, 0.564, 1}
\def\arrowhight{2.8pt}
\newcommand\Figref[1]{Figure \ref{#1}\ifthenelse{\value{page}=\pageref{#1}}{}{ on page \pageref{#1}}}
\newcommand\figref[1]{figure \ref{#1}\ifthenelse{\value{page}=\pageref{#1}}{}{ on page \pageref{#1}}}
\def\bt{\begin{center}\begin{tikzpicture}\matrix[matrix of math nodes,column sep=1cm, row sep=1cm]}
\def\bth{\begin{center}\begin{tikzpicture}\matrix[matrix of math nodes,column sep=.5cm, row sep=1cm]}
\def\et{\end{tikzpicture}\end{center}}
\def\btm#1#2{\begin{center}\begin{tikzpicture}\matrix[matrix of math nodes,column sep=#1, row sep=#2}
\def\bsc{\begin{scope}}
\def\esc{\end{scope}}
\def\|#1{\,\begin{tikzpicture}[baseline]\draw[-,thin](0,-3pt)--(0,6pt);\node[anchor=west] at (-1pt,-2pt) {$_{#1}\!\!\!$}; \end{tikzpicture}\,}
\def\Spec{\mathsf{Spec}}
\def\Hom{\mathrm{Hom}}
\newcommand{\id}{\operatorname{id}}
\newcommand{\can}{\operatorname{\it can}}
\def\bZ{\mathbb{Z}}
\def\bE{\mathbb{E}}
\def\bF{\mathbb{F}}
\def\bM{\mathbb{M}}
\def\Gal{\mathsf{Gal}}
\def\Fix{\mathsf{Fix}}
\def\Sub{\mathsf{Sub}}
\def\Id{\mathsf{Id}}
\def\coId{\mathsf{coId}}
\def\sup{\mathit{sup}}
\def\inf{\mathit{inf}}
\def\Quot{\mathsf{Quot}}
\def\qquot{\mathsf{Quot_{\textit{gen}}}}
\def\qsub{\mathsf{Sub_{\textit{gen}}}}
\def\id{\mathit{id}}
\def\can{\mathit{can}}
\def\Hom{\mathsf{Hom}}
\newtheorem{theorem}{Theorem}[section] 
\newtheorem*{theorem*}{Theorem}
\newtheorem*{mtheorem*}{Main Theorem}
\newtheorem{proposition}[theorem]{Proposition}
\newtheorem*{proposition*}{Proposition}
\newtheorem*{construction*}{Construction}
\newtheorem{definition}[theorem]{Definition}
\newtheorem*{definition*}{Definition}
\newtheorem*{observation*}{Observation}
\newtheorem{corollary}[theorem]{Corollary}
\newtheorem*{corollary*}{Corollary}
\newtheorem{lemma}[theorem]{Lemma \usefont{T1}{ptm}{m}{sl}}
\newtheorem*{lemma*}{Lemma}
\newtheorem{example}[theorem]{Example \usefont{T1}{ptm}{m}{sl}}
\newtheorem*{example*}{Example}
\newtheorem*{exercise*}{Exercise}
\newtheorem{remark}[theorem]{Remark \usefont{T1}{ptm}{m}{sl}}
\newtheorem*{remark*}{Remark}
\newtheorem*{note*}{Note}
\newtheorem*{question*}{Question}
\newtheorem*{notation*}{Notation}
\def\mc#1{\mathcal{#1}}
\def\ov{\overline}
\def\fill#1{\hbox to #1pt{\hfill}}
\def\hyphen{\,\text{-}\,}
\begin{document}
\title{Galois Theory of Hopf--Galois Extensions}
\authori{D. Marciniak}
\authoriaddress{}
\authoriemail{Dorofia@gmail.com}
\authorii{M. Szamotulski}
\authoriiaddress{}
\authoriiemail{Mszamot@gmail.com}
 

\maketitle
\thispagestyle{fancy}
\fancyfoot{}

\begin{abstract}   
	We introduce Galois Theory for Hopf--Galois Extensions proving
	existence of a Galois connection between subalgebras of an H-comodule
	algebra and generalised quotients of the Hopf algebra H. Moreover, we
	show that these quotients Q which define Q-Galois extension are the
	closed elements of our Galois connection. We discus the important
	problem of existence of a bijective correspondence between right
	ideals coideals and right coideal subalgebras of a Hopf algebra. For
	cleft extensions we characterise closed elements of the Galois
	connection as Hopf--Galois extensions.  We describe the relation of
	our results to the work of F.~van~Oystaeyen, Y.~Zhang and also to the
	results of P.~Schauenburg on biGalois extensions.
\end{abstract} 

We present a construction of a Galois connection between the
complete lattice of subalgebras of an $H$-comodule algebra $A$ and the complete
lattice of generalised quotients of a Hopf algebra $H$ (quotients by
coideals right ideals): 
\begin{theorem*}[Galois Theory for $H$-comodule algebras]
	For an $H$-comodule algebra $A$ over a field~$k$ there exists
	a \textsf{Galois connection}
	(Definition~\ref{defi:Galois-connection}): 
	\begin{equation}\label{eq:Main-Theorem}
		\Sub_{\textit{alg}}(A/A^{co\,H})\lgalois{}{\hspace{-.1cm}A^{co\,Q}\ellmap{}Q}\qquot(H)
	\end{equation} 
	\nopagebreak[4]
	where \(\qquot(H)=\{H/I:I\text{ - coideal right ideal of
	}H\}\).
\end{theorem*}
\noindent For short we will call this theorem '\textsf{Galois Theory}'. It is
a core of a Galois Theory and connects interesting areas: classical
\textit{Galois Theory for field extensions}, \textit{Hopf--Galois Theory},
which can be now formulated as a Galois Theory, and \textit{Lattice Theory}.
First important conclusion of this result is a bijection between
\textsf{closed elements}, i.e. elements which belong to the image of a Galois
connection. On the left hand side, \textsf{closed elements} are the extensions
of the form: $A/A^{co\,Q}$ for some $Q\in\qquot(H)$ (which we call
$H$-subextensions). Our aim is to describe closed elements of the right hand
side. This will answer the question: which quotients of $H$ classifies the
extensions of the form $A/A^{co\,Q}$? In Proposition~\ref{prop:closed} we show
that if $Q$ is such that $A^{co\,Q}\subseteq A$ is $Q$-Galois
(Definition~\ref{defi:H-ext}), then $Q$ is \textsf{closed}. From
both~\citet[Remark~1.2]{hs:normal-bases} and Proposition~\ref{prop:closed} it
follows that if \(B\in\Sub_\textit{alg}(A/A^{co\,H})\) and \(Q\in\qquot(H)\)
are such that:
\begin{enumerate}
    \item \(B\subseteq A^{co\,Q}\), and \(\can:A\otimes_BA\sir A\otimes Q\) is
	bijective,
    \item \(A\) is right or left faithfully flat over \(B\), 
\end{enumerate}
then \(B\) and \(Q\) are the corresponding closed elements.

We note that, the above Theorem can be extended to the case of $C$-comodule algebras
where \(C\) is a coalgebra with a group-like element.

We provide all the ingredients of the theory, namely: we prove that the
inclusion relation on the set of right ideals coideals defines a complete
lattice structure. We show that the set of subobjects, i.e. sub-Hopf algebras
and more generally subalgebras right coideals\footnote{Provided lattice
    theoretic results can be easily extended to the case of subalgebras left
    coideals and other mixed sub/quotient structures.} of a Hopf algebra form
a complete lattice.  When the Hopf algebra is finite dimensional then all the
lattices are algebraic and dually algebraic (Definition~\ref{defi:alglat}). 

Furthermore, we prove that in the case of the $H$-extension $k\subseteq H$ and
\emph{cleft extensions} \emph{$Q\in\qquot(H)$ is closed if and only if $A/A^{co
	Q}$ is $Q$-Galois} (Proposition~\ref{prop:closed-of-qquot} and
Theorem~\ref{thm:cleft-case}).  Thus in this case, the generalised quotients
\(Q\in\qquot(H)\) which are \(Q\)-Galois classifies $H$-subextensions.

This subject was already investigated
in~\cite{ps:hopf-bigalois,ps:gal-cor-hopf-bigal}
and~\cite{fo-yz:gal-cor-hopf-galois} but lack of explicit formulas in terms of
Hopf algebra structures led to difficulties to define the Galois connection
for the generality that we are dealing with. To overcome these difficulties we
use a new approach. We use Lattice Theory which provides an explicit formula
for a Galois connections between complete lattices in terms of the poset
structures (see~\eqref{eq:poset-formula}).


Let us briefly sketch other results that we prove. In finite dimensional case
we show the following
\begin{theorem*}[Finite Hopf--Galois Theory]
	Let $H$ be a finite dimensional Hopf algebra and $A/A^{co H}$ an
	$H$-Hopf--Galois extension. Then the Galois connection
	\eqref{eq:Main-Theorem} restricts to an \textsf{isomorphism}:
	\[\Sub_{\textit{H\text{-}ext}}(A/A^{co H})\simeq\qquot(H)\] 
	where the left hand side is the lattice of all $H$-extensions, i.e.
	the extensions of the form $A/A^{co Q}$ for some $Q\in\qquot(H)$.
\end{theorem*}
\noindent Thus the lattice $\qquot(H)$ \emph{classifies intermediate $H$-extensions} of
an $H$-Galois extension~$A/A^{co H}$.

From our Galois Theory we derive the Chase--Sweedler Theorem which lives on the
crossroads of Galois Theory for field extensions and Hopf--Galois Theory.
\noindent Also generalisation of Chase--Sweedler Theorem by~\citet[Theorem
2.3]{fo-yz:gal-cor-hopf-galois} to the case of a noncommutative $H$-module
algebra $A$ such that $A$ is $H^*$-Hopf--Galois over a finite dimensional Hopf
algebra $H$ follows from our theorem for finite Hopf--Galois extensions.


Next we apply our Galois Theory to the $H$-extension $k\subseteq H$. In this case
our Galois connection specifies to the following Galois correspondence (we
present here the left comodule version rather than right one as above): 
\begin{equation}\label{eq:Galois-connection-for-Hopf-algebra-left} 
\begin{array}{ccc}
    \Bigg\{K\subseteq H:K\text{\small - right coideal subalgebra}\Bigg\}&\hspace{-.4cm}\galois{\psi}{\phi}&\hspace{-.4cm}
    \Bigg\{H/I:I\text{ \small - left ideal coideal}\Bigg\}\\
\end{array}
\end{equation} 
\[\psi(K)=H/HK^+,\ \phi(H/I):=\,^{co\,H/I}H\] 
%
\noindent It was shown by~\citet[Theorem~3]{mt:rel-hopf-mod} in commutative
case and then proved by A.~Masuoka in the noncommutative
setting~\citeyearpar[Theorem~1.11]{am:quotient-theory-of-hopf-algebras} that there is
the
following bijective correspondence: \begin{equation}\label{eq:Takeuchi-prelim}
    \begin{array}{ccc} \Bigg\{K\subseteq H:K\text{-}\begin{array}{l}\text{
		    \small  right coideal subalgebra,}\\ H\text{ \small
		    faithfully flat over
		}K\end{array}\!\Bigg\}&\hspace{-.4cm}\galois{\psi}{\phi}&\hspace{-.4cm}
	\Bigg\{H/I:I\text{-}\begin{array}{l}\text{ \small left ideal
		    coideal,}\\ H\text{\small faithfully coflat}\\\text{
		    \small  over }H/I\end{array}\!\Bigg\}\\ \end{array}
\end{equation} Additionally H.-J.~Schneider proved that the above bijection
restricts to normal/conormal elements:
\begin{equation}\label{eq:Schneider-prelim} \begin{array}{ccc}
	\Bigg\{K\subseteq H:\!K\!\text{-}\!\begin{array}{l}\text{\small normal
		    sub-Hopf algebra,}\\ H\text{ \small faithfully flat over
		}K\end{array}\!\Bigg\}&\hspace{-.4cm}\galois{\psi}{\phi}&\hspace{-.4cm}
	\Bigg\{\!H/I:\!I\!\text{-}\begin{array}{l}\text{ \small  normal Hopf
		    ideal,}\\ H\text{ \small faithfully coflat}\\\text{ \small
		    over }H/I\end{array}\!\Bigg\}\\
\end{array} \end{equation} In this case we can sharpen our Galois Theory:
\begin{theorem*}\label{thm:closed}
	Let $H$ be a Hopf algebra. Then $Q\in\qquot(H)$ is \textsf{closed
	element} of the Galois connection~\eqref{eq:Main-Theorem} for 
	the $H$-extension $H/k$ if and only if $H/H^{co\,Q}$ is \textsf{$Q$-Galois}.
\end{theorem*}
\noindent There is the question of S.~Montgomery \emph{if the bijective
    correspondences~\eqref{eq:Takeuchi-prelim} and~\eqref{eq:Schneider-prelim}
    still survive if we don't assume faithfully flat/coflat conditions}
\cite[see][]{sm:hopf-alg}.  There is a positive answer to this question in the
case of finite dimensional Hopf algebras (Theorem~\ref{thm:newTakeuchi}) due
to~\citet[Corollary~6.5]{ss:projectivity-over-comodule-algebras}, who showed
that \(H\) a free module over any right coideal subalgebra, extending the
Nichols--Zoeller Theorem. Our results allow for a reformulation of the
infinite dimensional case:
\begin{proposition*}\label{prop:sm}
	Let \(H\) be a Hopf algebra. Then there is a bijective correspondence:
	\begin{equation}\label{eq:sm}
	    \bigg\{K\subseteq H:\,K\,-\text{right coideal subalgebra}\bigg\}\galois{\simeq}{}\bigg\{H/I:\,I\,-\text{left ideal coideal}\bigg\}
	\end{equation}
	if and only if 
	\begin{enumerate}[topsep=0pt,noitemsep]
	    \item for every its generalised quotient $Q$ the extension
		$^{co\,Q}H\subseteq H$ is $Q$-Galois
	    \item $\;^{co\,H/K^+H}\!H\subseteq K$ for every right coideal
		subalgebra $K$ of $H$.
	\end{enumerate}
\end{proposition*}
\noindent Note that if \(H\) is faithfully flat over \(K\) then
by~\cite[Remark~1.2]{hs:normal-bases} \(\;^{co\,H/K^+H}\!H=K\). The
correspondence~\eqref{eq:sm} can be intuitively understood as follows: let
$G=\Spec(H)$ be an affine group scheme. The set (of isomorphism classes) of
transitive $G$ sets is in bijection with subgroups of~$G$. Right coideal
subalgebras generalises transitive $G$-sets (in commutative case these are
affine quotients of $\Spec(H)$ which poses the natural action of $\Spec(H)$).
On the other side, subgroups correspond to quotients of the Hopf algebra $H$
(but we go beyond affine quotients as a general quotient might not be an
algebra).

F.~van~Oystaeyen and Y.~Zhang in~\cite{fo-yz:gal-cor-hopf-galois} prove
a noncommutative generalisation of the Chase--Sweedler theorem. In their paper
for the first time appear a remarkable construction of an additional Hopf
algebra. They construct a Hopf algebra $L(H,A)$ associated to
a commutative faithfully flat $H$-Hopf--Galois extension $A/B$ ($H$ is also
assumed to be commutative). The extension $A/B$ becomes
\emph{$L(H,A)$-$H$-bicomodule algebra} and a \emph{biGalois extension}. This
additional structure Hopf algebra $L(H,A)$ \emph{classifies intermediate
    $H$-comodule subalgebras} of $A/B$. Furthermore, when $A/B=\bE/\bF$ is
a field extension they prove the following Galois theorem:
\begin{theorem*}[{\citet[Theorem~4.7]{fo-yz:gal-cor-hopf-galois}}]
	Let $k\subseteq\bF$ be a field extension and let $H$ be a commutative
	and cocommutative $k$-Hopf algebra. Let $\bF\subseteq\bE$ be a
	field extension and an $H$-Hopf--Galois extension. Then there is
	a \textsf{one-to-one correspondence}:
	\begin{align}
		\left\{\begin{array}{c}
			\text{Hopf ideals of}\\
			\bF\otimes_kH\\
		\end{array}\right\}\ &\simeq\ 
		\left\{\begin{array}{c}
			H\text{-subcomodule}\\
			\text{subfields of }\bE\\
		\end{array}\right\}\label{eq:oz-1}
	\end{align}
	if $I$ -- a Hopf ideal, and $\bM$ -- an intermediate field extension of
	$\bE/\bF$, correspond to each other then\newline 
	\centerline{$\bE/\bM$ is $\bF\otimes_k(H/I)$-Hopf--Galois.} 
	Moreover, there is the following \textsf{bijection}:
	\begin{align}
		\left\{\begin{array}{c}
			\text{Hopf subalgebras of}\\
			\bF\otimes_kH\\
		\end{array}\right\}\ &\simeq\ 
		\left\{\begin{array}{c}
			$H$\text{-subcomodule}\\
			\text{subfields of }\bE\\
		\end{array}\right\}\label{eq:oz-2}
	\end{align}
	Furthermore, if $H'$ and $\bM$ correspond to each other then\newline 
	\centerline{$\bE/\bM$ is $(\bF\otimes_k H)/(\bF\otimes_k
	H)H'^+$-Hopf--Galois}
	where $H'^+:=\ker\epsilon\cap H'$.
\end{theorem*}
\noindent When $H$ is commutative and cocommutative then the Hopf algebra $L(H,A)$ is
equal to $\bF\otimes_kH$ \cite[Corollary~3.4]{fo-yz:gal-cor-hopf-galois}. The
proof of the previous theorem is based on this fact, so that $L(H,A)$ plays an
essential role. \cite{ps:hopf-bigalois} generalises the
construction of $L(H,A)$ to noncommutative extensions of rings $k\subseteq A$
over noncommutative (and noncocommutative) Hopf algebras. In his work
P.~Schauenburg proves the following theorem which is an extension of the
preceding result of F.~van~Oystaeyen and Y.~Zhang.
\begin{theorem*}[{\citet[Theorem~6.4]{ps:hopf-bigalois}}]
	Let $k\subseteq A$ be a faithfully flat $H$-Hopf--Galois extension of a
	ring $k$. Then there is the following \textsf{Galois connection}:
	\begin{equation}\label{eq:Schauenburg-Galois}
		\left\{\begin{array}{c}
			\text{coideals left ideals}\\
			\text{of }L(H,A)\\
		\end{array}\right\}\ \galois{}{}\ 
		\left\{\begin{array}{c}
			$H$\text{-subcomodule}\\
			\text{algebras of }A\\
		\end{array}\right\}
	\end{equation}
\end{theorem*}
\noindent If $B\in\Sub_{\textit{alg}^H}(A)$ is an $H$-subcomodule algebra such
that $A_B$ is a faithfully projective then it is a closed element
of~\eqref{eq:Schauenburg-Galois}. The closed elements of the left hand side
are the coideals left ideals which are $k$ direct summands of $L(H,A)$.
Furthermore, if $A$ is a skew field then the Galois
connection~\eqref{eq:Schauenburg-Galois} is an isomorphism.  Another result of
this type is given in~\cite[Theorem 3.6]{ps:gal-cor-hopf-bigal} where
P.~Schauenburg shows that the above Galois connection is a bijection on the
set of (left, right) admissible objects.  Where (right, left) admissibility is
the (right, left) faithfully flat/coflat condition
(Definition~\ref{defi:admissible}). In this paper we show a similar statement
for the Galois connection~\eqref{eq:Main-Theorem}. We show that the map
$Q\selmap{}A^{co\,Q}$ of the Galois connection~\eqref{eq:Main-Theorem} is
injective on the subset of (right, left) admissible objects provided $A/A^{co
    H}$ is a faithfully flat $H$-Hopf--Galois extension. The main result of
this section is Corollary~\ref{cor:equal} in which we conclude that (left,
right) admissible quotients of \(L(A,H)\) and \(H\) classifies the same
subalgebras of a \(k\)-algebra \(A\). 

We shall remark, that the Galois correspondences between posets of generalised
quotients of $L(H,A)$ and subalgebras of \(A\) is a special case of our Galois
Theory.

\section{Preliminaries}

\begin{definition}
	\textsf{Partially ordered} set, \textsf{poset} for short, is a set $P$
	together with an order relation $\preceq$ which is reflexive,
	transitive and antisymmetric.
\end{definition}

\begin{definition}[Galois connection]\label{defi:Galois-connection}
	Let $(P,\preceq)$ and $(Q,\leq)$ be two partially ordered sets.
	Antimonotonic morphisms of posets $\phi:P\mpr{}Q$ and $\psi:Q\mpr{}P$
	establishes a \textsf{Galois connection} if 
	\begin{equation}\label{eq:galprop}
		\mathop{\forall}\limits_{p\in P}\ p\preceq \psi\circ \phi(p)\
		and\ \mathop{\forall}\limits_{q\in Q}\ q\leq \phi\circ \psi(q)
	\end{equation} 
	We refer to this property as the \textsf{Galois property}. An element
	of $P$ (or $Q$) will be called \textsf{closed} if it is invariant
	under $\psi\phi$ ($\phi\psi$ respectively). Sets of closed elements
	will be denoted by $\ov P$ and $\ov Q$. Another name which appear in
	the literature for this notion is \textsf{Galois correspondence}.
\end{definition}

\begin{proposition}\label{prop:iso}
Let $P\galois{\phi}{\psi}Q$ be a Galois connection. Then the following
holds:
\begin{enumerate}[topsep=0pt,noitemsep]
\item $\ov P=\psi(Q)$ and $\ov Q=\phi(P)$
\item The restrictions $\phi|_{\ov P}$ and $\psi|_{\ov Q}$ are \textsf{inverse
	bijections} of $\ov P$ and $\ov Q$.
\item Map $\phi$ is \textsf{unique} in the sense that there exists only one Galois
	connection of the form $(\tilde\phi,\psi)$ for some map
	$\tilde\phi:P\mpr{}Q$, i.e. $\tilde\phi=\phi$. A similar statement
	holds for $\psi$. 
\item The map $\phi$ is \textsf{mono} (\textsf{onto}) if and only if the map $\psi$ is \textsf{onto}
	(\textsf{mono}). 
\item If one of the two maps is an isomorphism then the second is its inverse.
\end{enumerate}
\end{proposition}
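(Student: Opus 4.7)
The plan is to deduce everything from two preliminary identities, namely that $\phi\psi\phi=\phi$ and $\psi\phi\psi=\psi$, and consequently that $\psi\phi$ (resp.\ $\phi\psi$) is a closure operator on $P$ (resp.\ $Q$). To prove $\phi\psi\phi=\phi$, I would apply the antimonotone map $\phi$ to the inequality $p\preceq\psi\phi(p)$, obtaining $\phi\psi\phi(p)\leq\phi(p)$, and compare this with the Galois property $\phi(p)\leq\phi\psi(\phi(p))$; antisymmetry of $\leq$ finishes it. The identity for $\psi$ is symmetric.

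For item (1), if $p\in\overline P$ then $p=\psi\phi(p)\in\psi(Q)$; conversely, for $p=\psi(q)$ one computes $\psi\phi\psi(q)=\psi(q)$ via the identity above, so $p$ is closed. Item (2) is then immediate: by (1), $\phi$ sends $\overline P$ into $\overline Q=\phi(P)$ and $\psi$ sends $\overline Q$ into $\overline P$; the definition of closedness gives $\psi\circ\phi|_{\overline P}=\mathrm{id}_{\overline P}$ and $\phi\circ\psi|_{\overline Q}=\mathrm{id}_{\overline Q}$, so they are mutually inverse bijections.

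For (3) suppose $(\tilde\phi,\psi)$ is another Galois connection with the same $\psi$. Applying the antimonotone $\phi$ to $p\preceq\psi\tilde\phi(p)$ gives $\phi\psi\tilde\phi(p)\leq\phi(p)$, while the Galois property for $(\tilde\phi,\psi)$ with the element $\tilde\phi(p)\in Q$ combined with $\tilde\phi(p)\leq\phi\psi\tilde\phi(p)$ (Galois property for $\phi$) yields $\tilde\phi(p)\leq\phi(p)$; swapping the roles of $\phi$ and $\tilde\phi$ gives the reverse inequality, whence $\tilde\phi=\phi$ by antisymmetry. For (4), if $\psi$ is onto then every $p\in P$ lies in $\overline P=\psi(Q)$, so $\phi(p_1)=\phi(p_2)$ implies $p_1=\psi\phi(p_1)=\psi\phi(p_2)=p_2$, i.e.\ $\phi$ is mono. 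Conversely, if $\phi$ is mono then $\phi\psi\phi=\phi$ forces $\psi\phi(p)=p$ for every $p$, hence $\psi$ is onto. The parenthetical variant is symmetric. Finally, (5) follows from (4): if $\phi$ is an isomorphism then $\psi$ is both mono and onto, so $\overline P=P$ and $\overline Q=Q$, and (2) identifies $\psi$ with the inverse of $\phi$.

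The only non-routine step is the uniqueness in (3); the delicate point is to use the Galois inequalities for $\phi$ and $\tilde\phi$ alternately on the right elements (once on $p$ and once on $\tilde\phi(p)$ or $\phi(p)$), and this is where one must be careful to keep track of which relation is being applied to which poset. Everything else reduces to two applications of antisymmetry combined with antimonotonicity.
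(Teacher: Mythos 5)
Your proof is correct, and since the paper states Proposition~\ref{prop:iso} without proof (deferring to the standard literature on Galois connections), there is nothing to compare it against; your route via the identities $\phi\psi\phi=\phi$ and $\psi\phi\psi=\psi$ is exactly the standard argument, and all five items follow from it as you describe. The only blemish is a cosmetic mislabeling in (3) of which connection supplies the inequality $\tilde\phi(p)\leq\phi\psi\tilde\phi(p)$ (it is the Galois property of $(\phi,\psi)$ applied to the element $\tilde\phi(p)\in Q$), but the chain $\tilde\phi(p)\leq\phi\psi\tilde\phi(p)\leq\phi(p)$ and the symmetric reverse inequality are exactly right.
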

A lattice is a poset in which there exists supremum and infimum of any two
elementary subset or equivalently of any  finite subset. A lattice can also be
defined as an algebraic structure which has two binary operations: join (an
abstract supremum of two elements) denoted by $\vee$ and meet (an abstract
infimum of two elements) denoted by $\wedge$. 
We refer the reader to~\cite{gg:lattice-theory} for the theory of lattices.
\begin{definition*}
	A lattice $(L,\vee,\wedge)$ is \textsf{complete} if for every $B\subseteq
	L$ there exists $\sup B$ and $\inf B$.
\end{definition*}
In a lattice \(L\) there exists arbitrary infima if and only if there are
arbitrary suprema. 
\begin{definition}\label{defi:compact}
	An element $z$ of a lattice $L$ is called \textsf{compact} if for any
	subset $S\subseteq L$ such that $z\leq\bigvee S$ there exists a finite
	subset $S_f$ of $S$ with the property $z\leq\bigvee S_f$. 
\end{definition}
\begin{definition}\label{defi:alglat}
	A lattice is \textsf{algebraic} if it is complete and every its element is a
	supremum of compact elements. A lattice is dually algebraic if its
	dual, i.e. the one with the dual order, is algebraic.
\end{definition}
It is a well known theorem of Universal Algebra that lattices of subalgebras
and lattices of congruences (quotient structures) of any algebraic structure
are algebraic. In particular, the lattices of sub-objects and quotient objects of
classical algebraic structures like groups, semi-groups, rings, modules, etc.
are algebraic.

\section{Lattices of substructures and quotient structures}

\begin{proposition}
	Let \mbox{$(C,\Delta,\epsilon)$} be a coalgebra,
	\mbox{$(B,m,u,\Delta,\epsilon)$} a bialgebra and
	\mbox{$(H,m,u,\Delta,\epsilon,S)$} a Hopf algebra, all over a field
	$k$. Then subcoalgebras of $C$ -- \mbox{$(\Sub(C),\subseteq)$},
	subbialgebras of $B$ -- \mbox{$(\Sub_{\textit{bi}}(B),\subseteq)$} and sub-Hopf
	algebras of a Hopf algebra $H$ -- \mbox{$(\Sub_{\textit{Hopf}}(H),\subseteq)$}
	are complete lattices which additionally are algebraic and dually
	algebraic when $C,B,H$ are finitely dimensional.
\end{proposition}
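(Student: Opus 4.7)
The plan is to reduce completeness in all three cases to the statement that arbitrary intersections of the relevant substructures are again of the same type, and then to deduce existence of suprema abstractly. Concretely, given a family $\{D_i\}_{i\in I}$ of subobjects, I define $\inf_i D_i := \bigcap_i D_i$ and $\sup_i D_i := \bigcap\{D : D\text{ is a subobject and } D_i\subseteq D\text{ for all }i\}$; the top element is $C$ (resp.\ $B$, $H$) itself and the bottom element is $\{0\}$ (for subcoalgebras and subbialgebras the ground field must be handled, but one uses $k\cdot 1$ in the unital cases). Once closure under intersections is verified, the supremum formula is well-defined because the intersection on the right is a legitimate subobject.

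The main technical step is closure of subcoalgebras under arbitrary intersection. For a family $\{D_i\}_{i\in I}$ of subcoalgebras of $C$, one has $\Delta\bigl(\bigcap_i D_i\bigr)\subseteq\bigcap_i\Delta(D_i)\subseteq\bigcap_i(D_i\otimes D_i)$, and the point to establish is the linear-algebraic identity
\[
  \bigcap_{i\in I}(D_i\otimes D_i)\;=\;\Bigl(\bigcap_i D_i\Bigr)\otimes\Bigl(\bigcap_i D_i\Bigr).
\]
I would prove this by writing $D_i\otimes D_i = (D_i\otimes C)\cap(C\otimes D_i)$ and using that the functors $-\otimes C$ and $C\otimes-$ (over a field) are exact and therefore preserve arbitrary intersections of subspaces. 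This gives closure of $\Sub(C)$ under intersection; the subbialgebra case follows by combining this with the elementary fact that intersections of subalgebras are subalgebras; the sub-Hopf case adds stability under the antipode, which is automatic since $S$ is a linear map so $S(\bigcap_i D_i)\subseteq\bigcap_i S(D_i)\subseteq\bigcap_i D_i$.

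For the finite-dimensional assertion, I would argue that when $\dim_k C<\infty$, every strictly ascending or descending chain in $\Sub(C)$ has length bounded by $\dim_k C$, so both the ACC and DCC hold. Under ACC, for any $S\subseteq L$ the set of finite joins of elements of $S$ is directed and, by ACC, attains its maximum at some finite join $\bigvee S_f$; hence $\bigvee S=\bigvee S_f$, which shows every element of $L$ is compact in the sense of Definition~\ref{defi:compact}, and so every element is trivially a supremum of compact elements. This gives algebraicity. The dual argument, with DCC in place of ACC and meets in place of joins, gives dual algebraicity. The same reasoning applies verbatim to $\Sub_{\textit{bi}}(B)$ and $\Sub_{\textit{Hopf}}(H)$.

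The only subtle point I anticipate is the exchange of intersection and tensor product in the coalgebra step; once that linear-algebra identity is in hand the remaining verifications are formal, and the finite-dimensional part is a direct application of the chain-condition argument without any Hopf-algebraic input.
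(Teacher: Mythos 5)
Your proof is correct and is essentially the argument the paper declares straightforward and omits: closure of each class of subobjects under arbitrary intersections gives completeness (with the supremum as the intersection of all upper bounds), and in the finite-dimensional case the bounded chain length gives ACC/DCC, hence compactness of every element and algebraicity together with dual algebraicity, which matches the paper's one-line justification. The single point you should repair is the justification of $\bigcap_{i}(D_i\otimes C)=\bigl(\bigcap_i D_i\bigr)\otimes C$: exactness (flatness) of $-\otimes C$ does \emph{not} in general imply preservation of infinite intersections --- over $\mathbb{Z}$ take $W=\mathbb{Q}$ and $U_n=2^n\mathbb{Z}\subseteq\mathbb{Z}$; then $\bigcap_n U_n=0$ while every $U_n\otimes\mathbb{Q}$ maps onto $\mathbb{Q}$, so the intersection of the images is $\mathbb{Q}\neq 0$. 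Over a field the identity does hold, but the correct reason is freeness: choosing a basis $\{e_j\}$ of the fixed tensor factor, an element $\sum_j v_j\otimes e_j$ lies in $U\otimes C$ if and only if every $v_j\in U$, whence the intersection passes through; this is exactly the computation the paper itself performs in the proof of Theorem~\ref{thm:main} to show $\bigcap_i A\otimes I_i=A\otimes\bigcap_i I_i$. With that substitution the coalgebra step, and hence the bialgebra and Hopf cases (intersections of subalgebras, stability under $S$), as well as your ACC/DCC compactness argument for the finite-dimensional assertion, are all sound.
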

The proof is straightforward and will be omitted. In finite dimensional case
the lattices are algebraic and dually algebraic since every element of
a lattice of subspaces of a finite dimensional vector space is compact. 


Let $C$ be a coalgebra. Let us introduce standard notation: we let $\coId(C)$
denote the set of coideals of $C$, and $\coId_l(C),\ \coId_r(C)$ - the sets of
left, respectively right, coideals of $C$. They form complete lattice with
respect to the inclusion. If \(C\) is finite dimensional these lattices are
algebraic.

%

One can 'cogenerate' a coideal by a subset $Y$ of a coalgebra $C$. This is
defined as a join of all the coideals contained in $Y$, i.e. it is the largest
coideal contained in $Y$. We use this notion to define the meet operation in the
poset of coideals. This is dual to the case of algebras where the join is
defined as the ideal generated by the set-theoretic sum.

\begin{lemma}
	Let  $C$ be a coalgebra and $I_1,I_2$ two coideals. Then we have the
	following formulas for meet and join in the lattice of coideals of
	a coalgebra \(C\):
\begin{equation*}
I_1\vee I_2=I_1+I_2,\quad I_1\wedge I_2=+\{I\in \coId(C):\;I\subseteq I_1\cap I_2\}.
\end{equation*}
\end{lemma}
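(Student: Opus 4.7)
The plan is to treat the join and meet separately, with the join being essentially routine and the meet requiring only a small argument using the formula for the join extended to arbitrary families.

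For the join $I_1\vee I_2 = I_1+I_2$, I would first verify that $I_1+I_2$ is itself a coideal. Since $\epsilon(I_1)=\epsilon(I_2)=0$, linearity gives $\epsilon(I_1+I_2)=0$. For comultiplication, applying $\Delta$ term-wise and using that each $I_j$ is a coideal,
\[
\Delta(I_1+I_2)\subseteq \Delta(I_1)+\Delta(I_2)\subseteq \bigl(I_1\otimes C+C\otimes I_1\bigr)+\bigl(I_2\otimes C+C\otimes I_2\bigr)\subseteq (I_1+I_2)\otimes C+C\otimes(I_1+I_2).
\]
Then $I_1+I_2$ clearly contains both $I_1$ and $I_2$, and any coideal containing both must contain their sum; hence it is the least upper bound.

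For the meet, the key point is that $I_1\cap I_2$ need not itself be a coideal (unlike the situation for ideals of an algebra). Set $J:=\sum\{I\in\coId(C):I\subseteq I_1\cap I_2\}$. The same argument as above, applied to an arbitrary family rather than just two summands, shows that a sum of coideals is again a coideal; so $J\in\coId(C)$. By construction $J\subseteq I_1\cap I_2$, and every coideal contained in $I_1\cap I_2$ is one of the summands, hence contained in $J$. Since any lower bound of $\{I_1,I_2\}$ in $\coId(C)$ is in particular a coideal contained in $I_1\cap I_2$, this shows $J$ is the greatest lower bound.

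The only step needing any care is the generalisation of the join formula to arbitrary sums (so that $J$ is known \emph{a priori} to be a coideal); but this is immediate from the linearity of $\Delta$ and $\epsilon$. So I would expect no genuine obstacle — the lemma is a direct consequence of the definition of a coideal and the fact that colinear operations respect sums of subspaces.
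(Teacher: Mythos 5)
Your proof is correct and follows exactly the route the paper has in mind: the paper leaves the proof to the reader, but the preceding paragraph on ``cogenerating'' a coideal (the largest coideal contained in a subspace) is precisely your construction of the meet, and your verification that sums of coideals are coideals is the intended routine part. Nothing is missing.
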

The proof is straightforward and is left to the reader.
%
As a direct consequence we get that for a bialgebra $B$ be or a Hopf algebra
$H$ the posets \( (\Id_{\textit{bi}}(B),\subseteq)\),
\((\Id_{\textit{Hopf}}(H),\subseteq) \) are complete lattices which in finite
dimensional case are algebraic and dually algebraic.


Let $C$ be a coalgebra, $B$ a bialgebra and $H$ a Hopf algebra. Then we let
use the following notation: \(\Quot(C)=\{C/I|\;I\text{ is a coideal}\}\),
\(\Quot(B)=\{B/I|\;I\text{ is a biideal}\}\), \(\Quot(H)=\{H/I|\;I\text{ is
	    a Hopf ideal}\}\).  We define $\qquot(H)$
as the set of all quotients of $H$ by \textsf{coideal right ideal} (quotient
as a coalgebra and a right $H$-module).
\[\qquot(H):=\{H/I:\;I\text{ coideal right ideal}\}\]
As a poset it is dually isomorphic to the poset of right ideals coideals
of~\(H\).
\begin{proposition}\label{prop:qqlat}
	The poset $(\qquot(H),\succeq)$ is a \textsf{complete lattice}. When $H$ is
	finite dimensional then this lattice is \textsf{algebraic} and
	\textsf{dually algebraic}.
\end{proposition}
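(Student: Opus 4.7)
The plan is to exploit the order-reversing bijection $I \mapsto H/I$ between the poset $\mathcal{I}$ of right ideal coideals of $H$, ordered by inclusion, and $(\qquot(H),\succeq)$. This bijection is stated in the paragraph preceding the proposition. Since order-reversing bijections of posets swap algebraic with dually algebraic and preserve completeness, it suffices to prove that $\mathcal{I}$ is a complete lattice which, in the finite-dimensional case, is both algebraic and dually algebraic.

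First I would establish that $\mathcal{I}$ is closed under arbitrary sums. The right ideal condition is trivially preserved by sums; for the coideal condition one uses
\[
    \Delta\bigl(\textstyle\sum_\alpha I_\alpha\bigr)\subseteq \sum_\alpha \Delta(I_\alpha)\subseteq \sum_\alpha\bigl(I_\alpha\otimes H+H\otimes I_\alpha\bigr)\subseteq \bigl(\textstyle\sum_\alpha I_\alpha\bigr)\otimes H+H\otimes\bigl(\textstyle\sum_\alpha I_\alpha\bigr),
\]
together with the vanishing of $\epsilon$ on each summand. This gives the join $\bigvee_\alpha I_\alpha=\sum_\alpha I_\alpha$. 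In complete parallel with the lemma about coideals of a coalgebra stated earlier, the meet is defined as the largest right ideal coideal contained in $\bigcap_\alpha I_\alpha$, which exists as the sum of all such (this is where one cannot simply use the intersection, since intersections of coideals need not be coideals). Routine verification of the absorption identities then shows $\mathcal{I}$ is a complete lattice.

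For the finite-dimensional case, any chain in $\mathcal{I}$ is a chain of subspaces of $H$ and hence has length at most $\dim H$, so $\mathcal{I}$ has finite length and in particular satisfies both ACC and DCC. A standard lattice-theoretic argument (given $z\le\bigvee S$, the directed family of finite subjoins $\{\bigvee F:F\subseteq S\text{ finite}\}$ has a maximum by ACC, forcing $z\le\bigvee F_0$ for some finite $F_0$) shows that ACC makes every element compact, whence $\mathcal{I}$ is algebraic; dually, DCC makes every element dually compact, whence $\mathcal{I}$ is dually algebraic. Transferring through the order-reversing bijection, $\qquot(H)$ is algebraic and dually algebraic, completing the proof.

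The proof presents no serious obstacle: it is essentially bookkeeping, parallel to the coideal lemma quoted above. The only point that requires care is the meet construction, and the only slightly non-trivial ingredient is the standard fact that a lattice of finite length is both algebraic and dually algebraic.
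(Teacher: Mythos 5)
Your proof is correct and follows essentially the route the paper intends (the paper omits the proof, but its preceding lemma on coideals already sets up join $=$ sum and meet $=$ the largest (right ideal) coideal inside the intersection, and its finite-dimensional remark rests on compactness in the subspace lattice, which your ACC/DCC finite-length argument reproduces). The only cosmetic difference is that the paper would argue compactness directly from finite dimensionality of each element rather than via ACC, which changes nothing of substance.
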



\section{Galois connection in Hopf--Galois theory}

In this section we prove Galois Theory: existence of a Galois connection
between lattice of subalgebras and generalised quotients of a Hopf algebra in
the case of comodule algebras. In general we do not assume that $H$ is finite
dimensional. 
\begin{definition}
	Let $B\subseteq A$ be an extension of algebras then by
	\mbox{$\Sub_{\textit{alg}}(B\subseteq A)$} we denote the lattice of all
	subalgebras of $A$ which contains $B$.
\end{definition}

The defined lattice is an interval in the algebraic lattice of subalgebras of
$A$.

\begin{lemma}
	The lattice $\Sub_{\textit{alg}}(B\subseteq A)$ is an algebraic lattice.
\end{lemma}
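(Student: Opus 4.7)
The plan is to identify $\Sub_{\textit{alg}}(B\subseteq A)$ as an interval in the algebraic lattice $\Sub_{\textit{alg}}(A)$ and to transfer algebraicity to it. First I would observe that the interval is closed under arbitrary meets (the intersection of subalgebras containing $B$ still contains $B$) and arbitrary joins (the subalgebra generated by a union of subalgebras containing $B$ obviously contains $B$), so it is a complete sublattice of $\Sub_{\textit{alg}}(A)$. Crucially, since every member of a family $\{C_i\}\subseteq\Sub_{\textit{alg}}(B\subseteq A)$ already contains $B$, the join computed in the interval coincides with the join computed in $\Sub_{\textit{alg}}(A)$.

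Next I would identify the compact elements: for every finite $F\subseteq A$ the subalgebra $B\langle F\rangle$ generated by $B\cup F$ lies in the interval and is compact in it. The argument is the standard finitary one: if $B\langle F\rangle\subseteq\bigvee S$ for some family $S\subseteq\Sub_{\textit{alg}}(B\subseteq A)$, then by the previous paragraph $\bigvee S$ equals the subalgebra of $A$ generated by $\bigcup_{C\in S}C$, so each element of the finite set $F$ is a polynomial in finitely many elements taken from finitely many members $C_1,\dots,C_n\in S$. Hence $F\subseteq C_1\vee\cdots\vee C_n$, and since each $C_i$ already contains $B$ we obtain $B\langle F\rangle\subseteq C_1\vee\cdots\vee C_n$, proving compactness.

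Finally, for any $C\in\Sub_{\textit{alg}}(B\subseteq A)$ one has the tautological identity
\[
    C\;=\;\bigvee_{c\in C}B\langle c\rangle,
\]
which realises $C$ as a supremum of compact elements of the interval, yielding algebraicity.

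The only real point requiring attention is the compatibility of joins in the interval with joins in the ambient lattice $\Sub_{\textit{alg}}(A)$; this is what permits us to import compactness from the universal-algebra fact that $\Sub_{\textit{alg}}(A)$ is itself algebraic. Everything else is routine.
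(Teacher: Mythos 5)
Your proof is correct, but it takes a different route from the one in the paper. The paper's argument is a one-line reduction: regard $A$ as an algebra in the sense of Universal Algebra whose operations are those of the $k$-algebra $A$ together with the operations coming from the $B$-module structure (multiplication by each $b\in B$); the subalgebras of this enriched structure are exactly the subalgebras of $A$ containing $B$ (they contain the unit, hence each $b=b\cdot 1$), so $\Sub_{\textit{alg}}(B\subseteq A)$ is itself the subalgebra lattice of a universal algebra and the general theorem that such lattices are algebraic applies. You instead verify algebraicity of the interval directly: you check that joins computed in the interval agree with joins in $\Sub_{\textit{alg}}(A)$, exhibit the compact elements as the subalgebras $B\langle F\rangle$ generated over $B$ by finite sets, prove their compactness by the finitary-generation argument, and write every intermediate subalgebra as $\bigvee_{c\in C}B\langle c\rangle$. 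Both proofs ultimately rest on the same finitarity of generation; the paper's is shorter because it delegates that bookkeeping to the standard universal-algebra theorem, while yours is self-contained and has the added benefit of an explicit description of the compact elements of $\Sub_{\textit{alg}}(B\subseteq A)$ (the subalgebras finitely generated over $B$). Two harmless pedantic points: your ``complete sublattice'' claim should be read for nonempty families (the empty join in the interval is $B$, not the prime subalgebra of $A$), and in the compactness step the case of an empty or singleton family $S$ is trivial; neither affects the argument.
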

\noindent The shortest argument is that $\Sub_{\textit{alg}}(B\subseteq A)$ is
a lattice of algebras of suitably defined algebra (in the sense of Universal
Algebra, which has operations of the algebra \(A\) and operations which comes
from \(B\)-module structure). Every lattice of subalgebra of an algebra (in
the sense of Universal Algebra) is algebraic, thus the lemma follows.


\begin{definition}
	Let $H$ be a Hopf algebra, and $A$ an algebra. $A$ is said to be
	a \textsf{comodule algebra} if it is an $H$-comodule, which structure
	map is a map of algebras, i.e. there is a \emph{coassociative algebra
	    map} $\delta:A\mpr{}A\otimes H$ compatible with the counit of $H$:
	\begin{center}
	\begin{tikzpicture}[>=angle 60,thick]
	\matrix[matrix of math nodes,row sep=1cm,column sep=1.5cm]{
  	|(A)| A 		& |(AH)| A\otimes H\\
  	|(HA)| A\otimes H 	& |(AHH)| A\otimes H\otimes H\\
	};
	\begin{scope}
		\draw[->] (A) -- node[above]{$\delta$}(AH);
		\draw[->] (A) -- node[left]{$\delta$}(HA);
		\draw[->] (AH) -- node[right]{$\delta\otimes\id$} (AHH);
		\draw[->] (HA) -- node[below]{$\id\otimes\Delta$} (AHH);
	\end{scope}
	\end{tikzpicture}
	\hspace{1cm}
	\begin{tikzpicture}[>=angle 60,thick]
	\matrix[matrix of nodes,row sep=1cm,column sep=1cm]{
  	|(A)|  $A$	&  |(AH)| $A\otimes H$\\
			&  |(B)| $A$\\
	};
	\begin{scope}
		\draw[->] (A) -- node[above]{$\delta$} (AH);
		\draw[->] (AH) -- node[right]{$\id\otimes\epsilon$} (B);
		\draw[double] (A) -- (B);
	\end{scope}
	\end{tikzpicture}
	\end{center}
	Above, $\Delta$ stands for the comultiplication of $H$ and $\epsilon$
	is the counit of $H$. The set 
	$$A^{co\,H}:=\{a\in A:\;\delta(a)=a\otimes 1\}$$
	is a subalgebra of $A$ and is called \textsf{subalgebra of
	coinvariants}. 
\end{definition}
\begin{definition}[$H$-extension and $Q$-Galois extension]\label{defi:H-ext}
	Let $H$ be a Hopf algebra, $Q\in\qquot(H)$ and let $A$ be an
	$H$-comodule algebra with the subalgebra of coinvariants $B$. An
	intermediate extension of the form $A^{co\,Q}\subseteq A$ will be
	called an \textsf{intermediate $H$-extension}. The poset of all 
	intermediate $H$-extensions will be denoted as
	$\Sub_{\textit{H\text{-}ext}}(B\subseteq A)$. An intermediate $H$-extension
	$A^{co\,Q}\subseteq A$ will be called \textsf{$Q$-Galois} if the
	canonical map:
	\[\can_Q:A\otimes_{A^{co\,Q}}A\mpr{}A\otimes Q\]	
	is a bijection. The subposet of $\Sub_{\textit{H\text{-}ext}}(B\subseteq A)$
	consisting of all $Q$-Galois extensions will be denoted by
	$\Sub_{\textit{Q\text{-}Galois}}(B\subseteq A)$.
\end{definition}
\begin{theorem}[Galois Theory for $H$-extensions]\label{thm:main}
	Let $H$ be a Hopf algebra over a field $k$ and $A$ be an $H$-comodule
	algebra then there exists a \textsf{Galois connection} between 
	complete lattices:
\begin{equation}\label{eq:main}
	\qquot(H)\galois{\phi}{\psi}\Sub_{\textit{alg}}(A/A^{co\,H})
\end{equation}
where $\phi(Q)=A^{co\,Q}$.
\end{theorem}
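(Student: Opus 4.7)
The plan is to construct the Galois connection by giving an explicit formula for $\psi$, relying on the fact that both lattices are complete (by Proposition~\ref{prop:qqlat} and the preceding lemma). First I would check that $\phi$ is well-defined and antimonotone: the quotient map $\pi\colon H\twoheadrightarrow H/I$ sends $1$ to $1$, so $A^{co\,H}\subseteq A^{co\,Q}$, and $A^{co\,Q}$ is a subalgebra because $\delta$ is a map of algebras. If $Q_1\succeq Q_2$, i.e.\ $I_1\subseteq I_2$, the induced surjection $Q_1\twoheadrightarrow Q_2$ carries the defining identity of $A^{co\,Q_1}$ to that of $A^{co\,Q_2}$, yielding $A^{co\,Q_1}\subseteq A^{co\,Q_2}$.

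The heart of the proof is the construction of $\psi$. For $b\in A$ let $V_b\subseteq H$ be the smallest (automatically finite-dimensional) subspace with $\delta(b)-b\otimes 1\in A\otimes V_b$. Two properties are key:
$$V_b\subseteq\ker\epsilon,\qquad \Delta(V_b)\subseteq V_b\otimes 1+H\otimes V_b.$$
The first follows by applying $\id\otimes\epsilon$ to $\delta(b)-b\otimes 1$, which gives $b-b=0$. For the second, write $\delta(b)-b\otimes 1=\sum_i a_i\otimes h_i$ with $\{a_i\}$ linearly independent, apply coassociativity $(\delta\otimes\id)\delta(b)=(\id\otimes\Delta)\delta(b)$, and project with dual functionals $a_j^*$ on $A$ to extract the identity $\Delta(h_j)-h_j\otimes 1\in H\otimes V_b$ for each generator $h_j$ of $V_b$. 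Together these imply that the right ideal $V_bH$ is a coideal right ideal of $H$: $\epsilon(V_bH)=\epsilon(V_b)\epsilon(H)=0$, and
$$\Delta(V_bH)\subseteq(V_b\otimes 1+H\otimes V_b)(H\otimes H)\subseteq V_bH\otimes H+H\otimes V_bH.$$
Define $I_B:=\sum_{b\in B}V_bH$, a coideal right ideal since it is a sum of such, and set $\psi(B):=H/I_B\in\qquot(H)$.

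The remaining verifications are formal. The map $\psi$ is antimonotone by construction. The Galois inequality $B\subseteq\phi\psi(B)$ holds because $\delta(b)-b\otimes 1\in A\otimes V_b\subseteq A\otimes I_B$ for every $b\in B$. For the other inequality $Q\preceq\psi\phi(Q)$, equivalently $I_{A^{co\,Q}}\subseteq I$ where $Q=H/I$, any $b\in A^{co\,Q}$ satisfies $V_b\subseteq I$, hence $V_bH\subseteq I$ because $I$ is a right ideal, and summing gives $I_{A^{co\,Q}}\subseteq I$. The main obstacle is the coassociativity computation leading to $\Delta(V_b)\subseteq V_b\otimes 1+H\otimes V_b$; this is the structural input that makes the right-ideal closure $V_bH$ into a coideal, and the explicit formula for $\psi$ rests on it.
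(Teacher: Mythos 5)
Your argument is correct, but it proves the theorem by a genuinely different route than the paper. The paper never constructs $\psi$ explicitly: it invokes the existence theorem for Galois connections out of a complete lattice (Theorem~\ref{thm:exist}, the poset form of Freyd's adjoint functor theorem), so its whole burden is to show that $\phi$ turns suprema into infima, $A^{co\,\bigvee_i Q_i}=\bigcap_i A^{co\,Q_i}$. Since the meet of coideal right ideals is not the set-theoretic intersection but the largest coideal right ideal inside it, the delicate step there is that $\delta(a)-a\otimes 1\in A\otimes\bigcap_i I_i$ forces $\delta(a)-a\otimes 1\in A\otimes\bigwedge_i I_i$, which the paper proves in three stages (first $A=H$, then $A\otimes H$, then a general comodule algebra). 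Your coefficient-space lemma, $V_b\subseteq\ker\epsilon$ and $\Delta(V_b)\subseteq V_b\otimes 1+H\otimes V_b$, encapsulates essentially the same coassociativity-plus-dual-functional computation, but packaged so that the lattice subtlety never has to be confronted: $V_bH$ is a coideal right ideal contained in every coideal right ideal $I$ with $\delta(b)-b\otimes 1\in A\otimes I$, and $\psi(B)=H/\sum_{b\in B}V_bH$ then satisfies both Galois inequalities by inspection. What your route buys is an explicit Hopf-algebraic formula for $\psi$, which by uniqueness of the adjoint (Proposition~\ref{prop:iso}(3)) must coincide with the paper's map; the paper only has the order-theoretic expression \eqref{eq:poset-formula} and later remarks that no explicit formula is available in general. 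What the paper's route buys is the supremum-preservation statement itself, which is the form in which the result gets reused. One small gap in the routine part: for $\phi$ to land in $\Sub_{\textit{alg}}(A/A^{co\,H})$ you need $A^{co\,Q}$ closed under multiplication, and "$\delta$ is an algebra map" alone does not give this --- writing $\delta(a)=a\otimes 1+u$ and $\delta(b)=b\otimes 1+v$ with $u,v\in A\otimes I$, the cross term $uv$ lies in $A\otimes II\subseteq A\otimes IH$, and you must use that $I$ is a right ideal to absorb it back into $A\otimes I$; this is standard and easily repaired, and the same right-ideal property is already used elsewhere in your argument.
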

\noindent The map $\psi$ is unique what follows from general statement on
Galois connections (Proposition~\ref{prop:iso}~(3)).  Applying
Proposition~\ref{prop:iso}~(2) we obtain that $\phi(\qquot(H))$ and
$\psi(\Sub_{\textit{alg}}(A^{co\,H}\subseteq A))$ are dually isomorphic
posets. Note, that the above statement can be proved in more general
context of coactions of coalgebras. 


To prove Galois Theory we will use the following existence theorem
for Galois connections.
\begin{theorem}\label{thm:exist}
	Suppose that $P,Q$ are posets and moreover $P$ is complete. Then
	antimonotonic function $\phi:P\mpr{}Q$ is part of a Galois connection
	if and only if it reflects all suprema into infima.
\end{theorem}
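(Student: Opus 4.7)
The plan is to prove the two directions separately in the standard adjoint style. For the forward direction, assume $(\phi,\psi)$ is already a Galois connection and let $S\subseteq P$ with $s=\sup S$. Antimonotonicity of $\phi$ gives $\phi(s)\leq\phi(t)$ for each $t\in S$, so $\phi(s)$ is a lower bound of $\phi(S)$. To see it is the greatest lower bound, take any $q\in Q$ with $q\leq\phi(t)$ for all $t\in S$; the Galois property transforms this into $t\preceq\psi(q)$ for all $t\in S$, hence $s\preceq\psi(q)$, and a second use of the Galois property produces $q\leq\phi(s)$. Thus $\phi(\sup S)=\inf\phi(S)$.

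For the converse, which is the substantive part, suppose $\phi$ is antimonotonic and converts arbitrary suprema into infima. Using completeness of $P$, define
\[
\psi(q):=\sup\{p\in P:q\leq\phi(p)\},
\]
with the convention $\sup\emptyset=\bot_P$ so that $\psi$ is defined on all of $Q$. If $q_1\leq q_2$ in $Q$, then the set $\{p:q_2\leq\phi(p)\}$ is contained in $\{p:q_1\leq\phi(p)\}$, whence $\psi(q_2)\preceq\psi(q_1)$, so $\psi$ is antimonotonic.

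It remains to verify the two inequalities in \eqref{eq:galprop}. For any $p\in P$, the element $p$ itself lies in $\{p'\in P:\phi(p)\leq\phi(p')\}$, hence $p\preceq\psi(\phi(p))$. For any $q\in Q$, the sup-to-inf hypothesis gives
\[
\phi(\psi(q))=\phi\bigl(\sup\{p:q\leq\phi(p)\}\bigr)=\inf\{\phi(p):q\leq\phi(p)\},
\]
and since $q$ is a lower bound of $\{\phi(p):q\leq\phi(p)\}$, we conclude $q\leq\phi(\psi(q))$. Uniqueness of the constructed $\psi$ (and the fact that the displayed formula is forced) is a special case of Proposition~\ref{prop:iso}(3).

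There is no real obstacle here beyond matching the quantifiers correctly: the point to get right is that the hypothesis must be applied to \emph{every} supremum (not merely pairwise joins) so that it lines up with the universal quantifier over $q\in Q$ in the second Galois inequality. The choice of $\psi$ as a supremum rather than an infimum is dictated by antimonotonicity, and completeness of $P$ is used only to guarantee that the defining supremum exists for each $q\in Q$.
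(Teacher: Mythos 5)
Your proof is correct. It differs from the paper in that the paper does not argue this theorem directly at all: it invokes Freyd's characterisation of adjoint functors (citing Mac Lane and the reference on Galois connections), viewing an antitone Galois connection as a covariant adjunction $P\to Q^{op}$ between poset categories. Your argument is the elementary, self-contained version of the same fact: you verify the easy direction by two applications of the equivalence $q\leq\phi(p)\Leftrightarrow p\preceq\psi(q)$, and for the substantive direction you construct the adjoint explicitly as $\psi(q)=\sup\{p\in P:q\leq\phi(p)\}$, which is precisely the formula the paper records later as~\eqref{eq:poset-formula}; the verification of both inequalities in~\eqref{eq:galprop} is sound, and your use of the hypothesis for \emph{arbitrary} (including empty) suprema is exactly what makes $q\leq\phi(\psi(q))$ go through, since $\phi(\psi(q))=\inf\{\phi(p):q\leq\phi(p)\}$ and $q$ is a lower bound of that (possibly empty) set. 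What your route buys is transparency and an explicit Hopf-algebra-free formula for $\psi$ usable downstream; what the paper's citation buys is brevity and the placement of the result inside the general adjoint functor framework, at the cost of leaving the concrete formula implicit until~\eqref{eq:poset-formula}.
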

The above theorem is a special case of the P.J.~Freyd characterisation theorem
of adjoint functors \cite[Theorem~2,
p.121]{smc:categories-for-the-working-mathematician}. 
The pair of maps $(\phi,\psi):P\galois{}{}Q$ is a Galois connection if an only if
$(\phi,\psi):P\galois{}{}Q^{op}$ is a covariant adjunction where the category
structure is given by: the relation $p'\geq p$ is thought as a morphism from $p'$ to $p$.

For more on Galois connections we refer to~\cite{rb-rc-jg:methods-in-prog},
where among other results the above theorem is proved\footnote{See 
\textsf{nLab} entry for the modern view on adjoint functor theorem:
http://ncatlab.org/nlab/show/adjoint+functor+theorem or the
classical~\cite[p. 95]{smc:categories-for-the-working-mathematician}.}. Now we are
ready to prove Theorem~\ref{thm:main}.

\begin{proof}
	By Theorem~\ref{thm:exist}, it is enough to show that the map
	$\phi:Q\elmapt{} A^{co\,Q}$ reverses suprema, i.e.
	\[A^{co\,\bigvee_{i\in I}\, Q_i}=\bigcap_{i\in I}\,A^{co\,Q_i}\]
	From the set of inequalities: $\bigvee_{i\in I}\,Q_i\geq Q_j\ (j\in I)$ it
	follows that 
	\[A^{co\,\bigvee_{i\in I}\, Q_i}\subseteq\bigcap_{i\in I}\,A^{co\,Q_i}\]
	Fix an element $a\in\bigcap_{i\in I}\,A^{co\,Q_i}$. We let $I_i$ denote the
	coideal and right ideal such that $Q_i=H/I_i$. Then we can write the following
	\[\forall_{i\in I}\;a\in A^{co\,Q_i}\ \Leftrightarrow\ \forall_{i\in I}\;\delta(a)-a\otimes 1\in A\otimes I_i\ \Leftrightarrow\ \delta(a)-a\otimes 1\in A\otimes\bigcap_{i\in I}\,I_i\ \Leftrightarrow\ a\in A^{co\,\bigvee_{i\in I} Q_i}\]
	The equivalence in the middle holds because $\bigcap_{i\in I}A\otimes
	I_i=A\otimes \bigcap_{i\in I} I_i$ what we show below:
	\begin{align}
	\bigcap_{i\in I}\;A\otimes I_i\ni\sum_{l=1}^n a_l\otimes b_l&\;\Leftrightarrow\;\forall_{i\in I}\ \sum_{l=1}^n a_l\otimes b_l\in A\otimes I_i\notag\\
	&\;\Leftrightarrow\;\forall_{i\in I}\ \forall_{l=1,\dots,n}\ b_l\in I_i\notag\\
	&\;\Leftrightarrow\;\sum_{l=1}^n a_l\otimes b_l\in A\otimes\bigcap_{i\in I}I_i\notag
	\end{align}
	It remains to show that if $\delta(a)-a\otimes 1\in
	A\otimes\bigcap_{i\in I}I_i$ then \mbox{$\delta(a)-a\otimes 1\in
	    A\otimes\bigwedge_{i\in I}I_i$.} We proceed in three steps, first
	we prove this for $A=H$, then for $A\otimes H$ and then for any
	$H$-comodule algebra $A$. For $A=H$ this is equivalent to the
	existence of the Galois
	connection~\eqref{eq:Galois-connection-for-Hopf-algebra-left}. The case
	$A\otimes H$: let $x=\sum_{k=1}^na_k\otimes h_k\in A\otimes H$ be such
	that $\sum_{k=1}^na_k\otimes\Delta(h_k)- \sum_{k=1}^na_k\otimes
	h_k\otimes 1_H\in A\otimes H\otimes \bigcap_{i\in I}I_i$. Then we can
	choose $a_i$ such that they are linearly independent. Then by a choice
	of a complement of the span of $\{a_i\}_{i=1,\dots,n}$ we get
	$a_i^*:A\sir k$ such that $a_i^*(a_j)=0$ for $i\neq j$ and
	$a_i^*(a_i)=1$ for $i=1,\dots,n$.  Using $a_i^*$ we can reduce the
	question to the previous case. For general case, observe that if
	$a_{\mathit{(0)}}\otimes a_{\mathit{(1)}}-a\otimes 1\in A\otimes\bigcap_{i\in I}I_i$
	then $a_{\mathit{(0)}}\otimes a_{\mathit{(1)}}\otimes a_{\mathit{(2)}}-a_{\mathit{(0)}}\otimes
	a_{\mathit{(1)}}\otimes 1\in A\otimes H\otimes\bigcap_{i\in I}I_i$, thus by the
	previous case $a_{\mathit{(0)}}\otimes a_{\mathit{(1)}}\otimes a_{\mathit{(2)}}-a_{\mathit{(0)}}\otimes
	a_{\mathit{(1)}}\otimes 1\in A\otimes H\otimes\bigwedge_{i\in I}I_i$.
	Computing $\id_A\otimes\epsilon\otimes\id_H$ we get
	$\delta(a)-a\otimes 1\in A\otimes\bigwedge_{i\in I}I_i$.
\end{proof}
Note that the map: \(\psi\) of the above Galois connection may have be
defined, by the following formula:
\begin{equation}\label{eq:poset-formula}
    \psi(A')=\bigvee\{Q\in\qquot(H): A'\subseteq A^{co\,Q}\}
\end{equation}

\section{Closed elements of Galois connection for Hopf--Galois extensions}

In this section we show which elements of $\qquot(H)$ are closed in the Galois
connection~\eqref{eq:main}. The importance of this theorem lies in the fact
that closed elements of $\qquot(H)$ classifies elements of
$\Sub_\textit{H\text{-}ext}(A/B)$. Our main result of this section states that
$Q\in\qquot(H)$ is closed whenever $A/A^{co\,Q}$ is $Q$-Galois. It follows
that if $H$ is finite dimensional then every generalised quotient is closed
provided $A/A^{co\,H}$ has surjective (thus bijective) canonical map. As
a corollary we obtain a bijective correspondence between
$\Sub_\textit{H\text{-}ext}(A/B)$ and $\qquot(H)$ for finite dimensional Hopf
algebras.

\begin{proposition}\label{prop:mono}\footnote{We would like to thank P.~Hajac
	for his insight which helped to prove this proposition.} 
	Let $A$ be an $H$-comodule algebra (both $A$ and $H$ can be infinite
	dimensional) with surjective canonical map and let $A$ be
	a $Q_1$-Galois and a $Q_2$-Galois extension where $Q_1,Q_2\in
	\qquot(H)$. Then the following implication holds: 
	    \[A^{co\,Q_1}=A^{co\,Q_2}\ \Rightarrow\ Q_1=Q_2\]
\end{proposition}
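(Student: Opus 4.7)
The plan is to force equality of the right-ideal coideals $I_1, I_2 \subseteq H$ defining $Q_i = H/I_i$, by exploiting that the two $Q_i$-canonical maps are \emph{bijective} while the ambient $H$-canonical map is only surjective. Concretely, I will compare the kernels of the composite maps $A\otimes_B A \to A\otimes Q_i$ in two different ways, then transport the resulting identity along $\can$ from $A\otimes_B A$ to $A\otimes H$, and finally descend to $H$.

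Set $B := A^{co\,H}$ and $C := A^{co\,Q_1} = A^{co\,Q_2}$. Since each projection $\pi_i : H \twoheadrightarrow Q_i$ is a coalgebra surjection we have $B \subseteq C$, so there is a canonical surjection $p : A\otimes_B A \twoheadrightarrow A\otimes_C A$. A routine Sweedler-index check shows that for each $i$ the square with top edge $p$, bottom edge $\id\otimes\pi_i$, left edge $\can$, and right edge $\can_{Q_i}$ commutes. By the $Q_i$-Galois hypothesis $\can_{Q_i}$ is injective, so $\ker(\can_{Q_i}\circ p) = \ker p$, which is manifestly independent of $i$. Chasing the square the other way,
$$\ker(\can_{Q_i}\circ p)\;=\;\ker\bigl((\id\otimes\pi_i)\circ\can\bigr)\;=\;\can^{-1}(A\otimes I_i),$$
hence $\can^{-1}(A\otimes I_1) = \can^{-1}(A\otimes I_2)$. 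Applying $\can$, which is surjective by assumption, yields $A\otimes I_1 = A\otimes I_2$ inside $A\otimes H$; evaluating any linear form $a^*\colon A \to k$ with $a^*(1_A) = 1$ on the first tensor factor then gives $I_1 = I_2$, so $Q_1 = Q_2$.

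I do not anticipate a substantive obstacle. The only non-formal input is bijectivity of each $\can_{Q_i}$, which is precisely what kills the ambiguity that mere surjectivity of $\can$ would otherwise leave. Well-definedness of $\can$ on $A\otimes_B A$ and commutativity of the square above reduce to $\delta(b) = b\otimes 1$ for $b \in B$ together with the definition of the induced $Q_i$-coaction, so they are purely bookkeeping. Notably, no finite-dimensionality or (co)flatness assumption on $A$ or $H$ enters, which matches the generality of the statement.
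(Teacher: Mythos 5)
Your argument is correct; it starts from the same commutative square as the paper (the surjective canonical map $\can\colon A\otimes_{A^{co\,H}}A\to A\otimes H$ factored through the projection $p\colon A\otimes_{A^{co\,H}}A\to A\otimes_{C}A$ with $C=A^{co\,Q_1}=A^{co\,Q_2}$, so that $\can_{Q_i}\circ p=(\id\otimes\pi_i)\circ\can$), but the endgame is genuinely different. The paper uses full bijectivity of the $\can_{Q_i}$: from surjectivity of $\can$ it deduces $(\can_{Q_1}\circ\can_{Q_2}^{-1})\circ(\id\otimes\pi_2)=\id\otimes\pi_1$, descends $\can_{Q_1}\circ\can_{Q_2}^{-1}$ to a right $H$-linear and colinear map between the quotients, obtains $Q_2\succeq Q_1$, repeats symmetrically, and concludes by antisymmetry of the order on $\qquot(H)$. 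You instead compare kernels: injectivity of $\can_{Q_i}$ gives $\ker(\can_{Q_i}\circ p)=\ker p$, which is independent of $i$, while the commutative square identifies this kernel with $\can^{-1}(A\otimes I_i)$ (using exactness of $A\otimes(-)$ over the field to write $\ker(\id\otimes\pi_i)=A\otimes I_i$); surjectivity of $\can$ then turns the equality of preimages into $A\otimes I_1=A\otimes I_2$, and a functional on $A$ gives $I_1=I_2$ outright. What your route buys: you never need to descend $\can_{Q_1}\circ\can_{Q_2}^{-1}$ to a map $\pi$ of quotients nor verify its $H$-linearity and colinearity (a step the paper leaves rather implicit), you bypass the poset antisymmetry argument by getting equality of the coideals on the nose, and you only use injectivity of the maps $\can_{Q_i}$ rather than bijectivity. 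Both proofs rely on the base ring being a field in the same way, so neither is more general in that respect.
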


\begin{proof}
Let $B=A^{co\,Q_1}=A^{co\,Q_2}$ then we have the following commutative diagram:
\begin{center}
\begin{tikzpicture}[>=angle 60,thick]
\matrix[matrix of math nodes,column sep=1cm,row sep=1cm]{
		    &					& |(AQ1)| A\otimes Q_1\\
|(A)| A\otimes_B A  & |(B)| A\otimes_{A^{co\,H}}A	& |(C)|   A\otimes H\\
		    &					& |(AQ2)| A\otimes Q_2\\ 
};
\begin{scope}
\draw[->] (A) -- node[above]{$\can_{Q_1}$} (AQ1);
\draw[->] (A) -- node[below]{$\can_{Q_2}$} (AQ2);
\draw[<<-] (A) -- (B);
\draw[->] (B) -- node[above,pos=.4]{$\can$}(C);
\draw[->] (C) -- node[right]{$id\otimes\pi_1$}(AQ1);
\draw[->] (C) -- node[right]{$id\otimes\pi_2$}(AQ2);
\end{scope}
\end{tikzpicture}
\end{center}
The maps $\can_{Q_1}$ and $\can_{Q_2}$ are isomorphisms. Let 
$f:=(\can_{Q_1}\circ\can_{Q_2}^{-1})\circ(id\otimes\pi_2)$, $g:=(id\otimes\pi_1)$.
By commutativity of the above diagram, $f\circ\can$ and $g\circ\can$ are
equal. Moreover, surjectivity of $\can$ yields
the equality $(\can_{Q_1}\circ
\can_{Q_2}^{-1})\circ(id\otimes\pi_2)=(id\otimes\pi_1)$. It follows that there
exists $\pi:\,Q_1\ir Q_2$ such that $\can_1\circ \can_2^{-1}=id\otimes \pi$
and $\pi\circ\pi_2=\pi_1$.  Furthermore, $\pi$ is right $H$-linear and
colinear, thus $Q_2\succeq Q_1$.
%
In the same way we prove that $Q_1\succeq Q_2$ (take $\can_2\circ
\can_1^{-1}$ instead of $\can_1\circ \can_2^{-1}$); because $\succeq$ is an
order (antisymmetry) we get $Q_1=Q_2$.
\end{proof}

%
%

\begin{proposition}\label{prop:closed}
	If $A$ is an $H$-comodule algebra with epimorphic canonical map
	(both $A$ and $H$ can be infinite dimensional) then every $Q\in
	\qquot(H)$ for which $A^{co\,Q}\subseteq A$ is a $Q$-Galois extension
	is a \textsf{closed element} of the Galois connection~\eqref{eq:main}.
\end{proposition}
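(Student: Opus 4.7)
The plan is to verify $\psi\phi(Q)=Q$. The Galois property provides $Q\preceq\psi\phi(Q)$ in $(\qquot(H),\succeq)$, so it suffices to establish the reverse inequality $\psi\phi(Q)\preceq Q$. Using formula~\eqref{eq:poset-formula},
\[
\psi\phi(Q)\;=\;\bigvee\{Q'\in\qquot(H)\,:\,A^{co\,Q}\subseteq A^{co\,Q'}\},
\]
and since this is a least upper bound, the task reduces to showing that $Q$ itself is an upper bound of the set: for every such $Q'$, $Q\succeq Q'$. Writing $Q=H/I$, $Q'=H/I'$ with projections $\pi\colon H\to Q$ and $\pi'\colon H\to Q'$, this is the inclusion $I\subseteq I'$, equivalently the assertion that $\pi'$ factors through $\pi$.

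Set $B:=A^{co\,Q}$. I would construct a comparison map between $A\otimes Q$ and $A\otimes Q'$ out of the two canonical maps over $B$. By the $Q$-Galois hypothesis, $\can_Q\colon A\otimes_B A\to A\otimes Q$ is a bijection. Because $B\subseteq A^{co\,Q'}$, the assignment $a\otimes_B c\mapsto\sum ac_{\mathit{(0)}}\otimes\pi'(c_{\mathit{(1)}})$ is well defined and yields a $k$-linear map $\can_{Q'}^B\colon A\otimes_B A\to A\otimes Q'$. Define $F:=\can_{Q'}^B\circ\can_Q^{-1}\colon A\otimes Q\to A\otimes Q'$.

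The key identity to establish next is $F\circ(\id\otimes\pi)=\id\otimes\pi'$ on $A\otimes H$. This follows by chasing the commutative diagram formed by the main canonical map $\can\colon A\otimes_{A^{co\,H}}A\to A\otimes H$ and the natural surjection $s\colon A\otimes_{A^{co\,H}}A\to A\otimes_B A$: one has $(\id\otimes\pi)\circ\can=\can_Q\circ s$ and $(\id\otimes\pi')\circ\can=\can_{Q'}^B\circ s$. Composing the former with $F$ gives
\[
F\circ(\id\otimes\pi)\circ\can\;=\;\can_{Q'}^B\circ s\;=\;(\id\otimes\pi')\circ\can,
\]
and surjectivity of $\can$ (the standing hypothesis of the proposition) lets me cancel $\can$ on the right.

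Finally, if $h_1,h_2\in H$ satisfy $\pi(h_1)=\pi(h_2)$, then applying $F$ to the equal elements $1\otimes\pi(h_1)=1\otimes\pi(h_2)$ and invoking the identity yields $1\otimes\pi'(h_1)=1\otimes\pi'(h_2)$ in $A\otimes Q'$; since the tensor product is over the field $k$ and $1\in A$ is nonzero, this forces $\pi'(h_1)=\pi'(h_2)$. Hence $\ker\pi\subseteq\ker\pi'$, i.e., $I\subseteq I'$, which is exactly $Q\succeq Q'$. The main obstacle I anticipate is the verification that $\can_{Q'}^B$ is well defined on the balanced tensor product $A\otimes_B A$; this relies on the right $H$-module structure of $Q'$ together with the $Q'$-coinvariance of $B$, and is parallel to the standard descent of $\can_{Q'}$ to $A\otimes_{A^{co\,Q'}}A$. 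Once in place, the diagrammatic argument follows essentially the same pattern already used in the proof of Proposition~\ref{prop:mono}.
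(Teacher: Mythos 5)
Your proof is correct, but it is organised differently from the paper's. The paper verifies closedness by passing to $\widetilde Q:=\psi\phi(Q)$, sandwiching $\can_{\widetilde Q}$ between the surjective $\can_H$ and the bijective $\can_Q$ to conclude that $A/A^{co\,\widetilde Q}$ is $\widetilde Q$-Galois, and then invoking the injectivity result of Proposition~\ref{prop:mono} (both quotients Galois, same coinvariants $\Rightarrow$ equal) to force $\widetilde Q=Q$. You instead use the explicit supremum formula~\eqref{eq:poset-formula} and prove the one-sided domination statement directly: for \emph{any} $Q'\in\qquot(H)$ with $A^{co\,Q}\subseteq A^{co\,Q'}$ (no Galois hypothesis on $Q'$), the comparison map $F=\can_{Q'}^{B}\circ\can_Q^{-1}$ together with cancellation of the surjective $\can$ gives $F\circ(\id\otimes\pi)=\id\otimes\pi'$, whence $I\subseteq I'$ after evaluating at $1\otimes h$ (this last step uses that $k$ is a field and $1\neq 0$ in $A$, which is also implicit in the paper's argument). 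The underlying trick — compose a canonical-type map with $\can_Q^{-1}$ and cancel the epimorphic $\can$ — is the same one the paper uses inside the proof of Proposition~\ref{prop:mono}, but your asymmetric version is strictly stronger: it shows $Q$ is the \emph{maximum} of $\{Q':A^{co\,Q}\subseteq A^{co\,Q'}\}$, it makes the proof independent of Proposition~\ref{prop:mono}, and it recovers that proposition as a corollary (apply the domination both ways when $A^{co\,Q_1}=A^{co\,Q_2}$ and both are Galois). What the paper's route buys is brevity once Proposition~\ref{prop:mono} is available and an explicit identification of $\widetilde Q$ as itself a Galois quotient; what yours buys is a self-contained argument and a sharper description of $\psi\phi(Q)$. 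The only point needing care, which you correctly flag, is the well-definedness of $\can_{Q'}^{B}$ on $A\otimes_BA$; it follows since the defining bilinear map is $A^{co\,Q'}$-balanced (using that $I'$ is a right ideal, so $Q'$ is a right $H$-module) and $B\subseteq A^{co\,Q'}$.
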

\begin{proof}
Fix $A^{co Q}$ for some $Q\in \qquot(H)$. Then $\phi^{-1}(A^{co\,Q})$ is
a non-empty upper-sublattice  of $\qquot(H)$ (i.e. it is a subposet closed
under finite suprema) which has the greatest element, namely $\widetilde
Q=\psi(A^{co\,Q})$. This follows from Theorem~\ref{thm:main} and properties of
Galois connections. The generalised quotient $\widetilde Q$ is the only closed
element belonging to \mbox{$\phi^{-1}(A^{co\,Q})$}. Both $Q\leq\psi(A^{co\,Q})$ and
the observation that $A^{co\,Q}\subseteq A$ is $Q$-Galois imply that
$\widetilde Q$ is also such.  We have the commutative diagram:
\begin{center}
\begin{tikzpicture}
	\node (A) at (0,0) 	{$A\otimes_B A$};	         	
	\node (B) at (3cm,0)  	{$A\otimes H$};
  	\node (C) at (0cm,-1.7cm) {$A\otimes_{A^{co\,\widetilde Q}}A$}; 
	\node (D) at (3cm,-1.7cm) {$A\otimes \widetilde Q$};
  	\node (E) at (0cm,-3.4cm) {$A\otimes_{A^{co\,Q}}A$};		
	\node (F) at (3cm,-3.4cm) {$A\otimes Q$};
\begin{scope}[>=angle 60,thick]
\draw[->>] (A) -- node[above]{$\can_H$} (B);
\draw[->>] (A) -- (C);
\draw[->>] (B) -- (D);
\draw[->]  (C) -- node[above]{$\can_{\widetilde Q}$} (D);
\draw[->]  (E) -- node[below]{$\simeq$} node [above]{$\can_Q$}  (F);
\draw[->]  (C) -- node[left]{$=$}(E);
\draw[->>] (D) -- (F);
\end{scope}
\end{tikzpicture}
\end{center}
From the lower commutative square we get that $\can_{\widetilde Q}$ is a
monomorphism and from the upper commutative square one can deduce that
$\can_{\widetilde Q}$ is onto. Unless $\widetilde Q=Q$ we get a contradiction
with Proposition~\ref{prop:mono}.
\end{proof}

\begin{corollary}
    Let \(B\in\Sub_\textit{alg}(A/A^{co\,H})\) and \(Q\in\qquot(H)\) be such that:
    \begin{enumerate}
	\item \(B\subseteq A^{co\,Q}\), and \(\can:A\otimes_BA\sir A\otimes Q\) is
	    bijective,
	\item \(A\) is right or left faithfully flat over \(B\), 
    \end{enumerate}
    then, by~\cite[Remark~1.2]{hs:normal-bases} \(B=A^{co\,Q}\) and thus
    \(A/A^{co\,Q}\) is a \(Q\)-Galois extension. It follows that \(B\) and
    \(Q\) are the corresponding closed elements.
\end{corollary}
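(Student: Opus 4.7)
The proof is essentially an application of three ingredients already assembled in the paper, so the plan is to identify the pieces and glue them together.

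First, I would invoke the cited Remark~1.2 of \cite{hs:normal-bases}: under faithful flatness of $A$ over $B$ (on either side), the bijectivity of $\can:A\otimes_BA\to A\otimes Q$ together with the inclusion $B\subseteq A^{co\,Q}$ forces $B=A^{co\,Q}$. The intuitive reason is that $B$ and $A^{co\,Q}$ are both equalizers of the two natural maps $A\rightrightarrows A\otimes Q$ (one given by the coaction composed with projection, the other by $a\mapsto a\otimes\bar 1$), and faithful flatness ensures that this equalizer is computed correctly once the canonical map is an isomorphism. Thus the hypothesis (1) in the statement, read through (2), upgrades to the equality $B=A^{co\,Q}$.

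Once that equality is established, hypothesis (1) is literally the definition of $A/A^{co\,Q}$ being a $Q$-Galois extension (Definition~\ref{defi:H-ext}). At this point Proposition~\ref{prop:closed} applies and yields that $Q$ is a closed element of the Galois connection~\eqref{eq:main}, i.e.\ $Q=\psi\phi(Q)=\psi(A^{co\,Q})=\psi(B)$.

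Finally, to see that $B$ itself is a closed element on the subalgebra side, I would appeal to Proposition~\ref{prop:iso}~(1)--(2): the closed elements of $\Sub_{\textit{alg}}(A/A^{co\,H})$ are exactly the images of $\phi$, and since $B=A^{co\,Q}=\phi(Q)$, the element $B$ lies in $\phi(\qquot(H))=\overline{\Sub_{\textit{alg}}(A/A^{co\,H})}$. The bijection part of Proposition~\ref{prop:iso}~(2) then identifies $B$ and $Q$ as the pair of corresponding closed elements, which is what the corollary asserts. There is no real obstacle here; the only delicate point is citing Remark~1.2 of \cite{hs:normal-bases} correctly, since everything else is formal bookkeeping built on Proposition~\ref{prop:closed} and the abstract properties of Galois connections.
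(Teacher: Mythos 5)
Your proposal matches the paper's own argument step for step: Remark~1.2 of \cite{hs:normal-bases} upgrades $B\subseteq A^{co\,Q}$ to $B=A^{co\,Q}$ under faithful flatness, so hypothesis (1) says $A/A^{co\,Q}$ is $Q$-Galois, and then Proposition~\ref{prop:closed} together with the general properties of Galois connections (Proposition~\ref{prop:iso}) identifies $Q$ and $B=\phi(Q)$ as the corresponding closed elements. This is exactly the route the paper takes, so nothing further is needed.
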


\citet[Corollary~3.3]{ps-hs:gen-hopf-galois} give conditions under which an
extension $^{co\,Q}A\subseteq A$ is a $Q$-Galois and thus when it is a closed
element of the Galois connection~\eqref{eq:main}. Let us cite it
here:

\begin{proposition}[{\citet[Corollary~3.3]{ps-hs:gen-hopf-galois}}]\label{prop:corollary-3.3}
	Let $H$ be a Hopf algebra over a ring $k$ with bijective antipode and
	let $A$ be an $H$-comodule algebra with epimorphic canonical map. Let
	$Q\in\qquot(H)$, then it follows that in each of the case $A/A^{co\,Q}$
	is $Q$-Galois and $A$ is a projective left $A^{co\,Q}$-module:
	\begin{enumerate}
		\item[(1)] $k$ is a field and $H$ is finite dimensional,
		\item[(2)] $H$ is finitely generated projective over k, coflat
		    as a right Q-comodule, and the surjection $H\mpr{}Q$
		    splits as a left $Q$-comodule map,
		\item[(3)] $H$ has enough right integrals, is coflat as
		    a right $Q$-comodule, and the surjection $H\mpr{}Q$ splits
		    as a left $Q$-comodule map,
		\item[(4)] $k$ is a field, $H$ is co-Frobenius, and faithfully
		    coflat both as a left and a right $Q$-comodule,
		\item[(5)] $H$ is $Q$-cleft and $Q$ is finitely generated
		    projective,
		\item[(6)] $k$ is a field, $H$ has cocommutative coradical,
		    and $Q$ is finite dimensional and of the form $Q=H/K^+H$
		    for a Hopf subalgebra $K$ of $H$.
	\end{enumerate}
\end{proposition}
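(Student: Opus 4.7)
The plan is to argue case by case, since each of the six hypotheses supplies a different structural lever that allows us to descend the given surjectivity of $\can\colon A\otimes_{A^{co\,H}}A\twoheadrightarrow A\otimes H$ along the quotient $\pi\colon H\twoheadrightarrow Q$ to bijectivity of $\can_Q\colon A\otimes_{A^{co\,Q}}A\to A\otimes Q$, while simultaneously producing a projective-module decomposition of $A$ over $A^{co\,Q}$. The common template is to construct a right $H$-colinear section $\sigma\colon Q\to H$ of $\pi$; given such a $\sigma$ together with the surjective $\can$, a straightforward diagram chase produces a two-sided inverse to $\can_Q$, and faithful coflatness of $H$ over $Q$ then upgrades this to a Hopf-module style decomposition $A\simeq A^{co\,Q}\otimes Q$ witnessing projectivity of $A$ as a left $A^{co\,Q}$-module.

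With that template in mind, cases (2) and (3) reduce to verifying that the hypothesized splitting of $\pi$ as a left $Q$-comodule map can be upgraded, using respectively the finite projectivity data or the existence of enough right integrals in $H$, to the right $H$-colinear section just described; case (4) does the same via co-Frobenius duality and the two-sided faithful coflatness assumption. Case (1) is then immediate from (2), since over a field a finite-dimensional Hopf algebra is free over every Hopf subalgebra by Nichols--Zoeller and the required coflatness and splitting are automatic. Case (5) is the cleanest: a cleaving map realises $A$ as a crossed product $A^{co\,Q}\#_\sigma Q$, from which $Q$-Galois-ness and freeness of $A$ over $A^{co\,Q}$ are both transparent once $Q$ is finitely generated projective.

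Case (6) will be the main obstacle and is handled indirectly: here $Q=H/K^+H$ for a Hopf subalgebra $K\subseteq H$, and one uses cocommutativity of the coradical of $H$ to twist the surjection $H\to Q$ by a suitable cocycle so that $H$ becomes $Q$-cleft, reducing to case (5). The technical heart of the entire proposition is thus the production of the colinear splitting $\sigma$ from the coalgebra-theoretic data of (3), (4) and (6); since this is precisely the content of \citet[Corollary~3.3]{ps-hs:gen-hopf-galois}, I would cite their arguments rather than reproduce them, and simply verify that the resulting splittings feed correctly into the descent template above to give the two conclusions simultaneously.
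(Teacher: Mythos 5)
The paper never proves this proposition: it is quoted verbatim as an external input, attributed to \citet[Corollary~3.3]{ps-hs:gen-hopf-galois}, and is only \emph{used} later (to feed Proposition~\ref{prop:closed} and Theorem~\ref{thm:f-dim}). So your ultimate decision to cite the source for the technical core is exactly what the paper does, and there is no internal argument to compare yours against. In that sense the proposal is acceptable as a citation; the problem is that the expository scaffolding you wrap around the citation contains concrete errors, so it should not be presented as a proof sketch without repair.

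Specifically: (i) in case (5) the hypothesis is that \emph{$H$} is $Q$-cleft, i.e.\ the extension $H^{co\,Q}\subseteq H$ is cleft --- not that $A$ is --- so you cannot realise $A$ as a crossed product $A^{co\,Q}\#_{\sigma}Q$ from a cleaving map; the whole point of the result is to transfer hypotheses placed on $H$ and $Q$ alone to an arbitrary comodule algebra $A$ with surjective canonical map. (ii) Case (1) is not ``immediate from (2) by Nichols--Zoeller'': here $Q\in\qquot(H)$ is a quotient by an arbitrary coideal right ideal, not necessarily of the form $H/K^{+}H$ for a Hopf subalgebra $K$, so Nichols--Zoeller freeness does not apply; one needs freeness/injectivity over arbitrary coideal subalgebras in the spirit of \cite{ss:projectivity-over-comodule-algebras}, or the source's own argument. (iii) Your ``common template'' is not well typed: $Q$ carries no natural right $H$-comodule structure (only left and right $Q$-comodule structures induced by $\Delta$), so a ``right $H$-colinear section $\sigma\colon Q\to H$'' is not meaningful as stated, and the passage from a left $Q$-comodule splitting of $H\to Q$ plus surjectivity of $\can$ to bijectivity of $\can_Q$ is not a ``straightforward diagram chase'' --- in the source it goes through (faithful) coflatness and the structure theory of relative Hopf modules; likewise faithful coflatness does not simply yield $A\simeq A^{co\,Q}\otimes Q$ (that would be a normal-basis statement, stronger than the projectivity asserted). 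If anything beyond the bare citation is to be kept, these three points need to be fixed.
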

Let us remark that due to~\citet{rl-ms:bilinear-form-for-hopf-algebras} finite
dimensional Hopf algebras have bijective antipode and thus it is not needed to
assume it in point (1) above.  The following theorem is a generalisation of
\cite[Theorem~4.7]{fo-yz:gal-cor-hopf-galois} which we included in the
introduction.
\begin{theorem}\label{thm:f-dim}
	Let $H$ be a finite dimensional Hopf algebra and let $B\subseteq A$ be
	an $H$-Hopf--Galois extension. Then every intermediate $H$-extension is
	a $Q$-Galois and there is an anti-isomorphism of posets: 
	\[\Sub_{\textit{H\text{-}ext}}(B\subseteq A)\simeq\qquot(H)\] 
\end{theorem}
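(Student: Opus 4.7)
The plan is to combine three ingredients already established in the paper: Proposition~\ref{prop:corollary-3.3}(1) (every generalised quotient yields a $Q$-Galois extension in the finite-dimensional setting), Proposition~\ref{prop:closed} (such quotients are closed in the Galois connection~\eqref{eq:main}), and Proposition~\ref{prop:iso}(1,2) (closed elements on the two sides are in inverse bijection through $\phi,\psi$). After assembling these, the final task is to identify the closed elements on the subalgebra side with $\Sub_{\textit{H\text{-}ext}}(B\subseteq A)$ by the very definition of an intermediate $H$-extension.

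First I would verify the hypotheses of Proposition~\ref{prop:corollary-3.3}(1). Since $A/B$ is $H$-Hopf--Galois the canonical map $\can\colon A\otimes_BA\sir A\otimes H$ is bijective, hence in particular epimorphic; by the cited result of Larson--Sweedler, finite-dimensional Hopf algebras automatically have bijective antipode, so no extra hypothesis is needed. Applying Proposition~\ref{prop:corollary-3.3}(1) then gives that for every $Q\in\qquot(H)$ the intermediate $H$-extension $A^{co\,Q}\subseteq A$ is $Q$-Galois, proving the first assertion.

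Next, Proposition~\ref{prop:closed} immediately implies that \emph{every} $Q\in\qquot(H)$ is a closed element of the Galois connection~\eqref{eq:main}; equivalently the set of closed elements on the right equals the whole lattice $\qquot(H)$. By Proposition~\ref{prop:iso}(1) the closed elements on the left are precisely $\phi(\qquot(H))=\{A^{co\,Q}:Q\in\qquot(H)\}$, which is by definition $\Sub_{\textit{H\text{-}ext}}(B\subseteq A)$. Proposition~\ref{prop:iso}(2) then provides inverse bijections $\phi|_{\qquot(H)}$ and $\psi|_{\Sub_{\textit{H\text{-}ext}}(B\subseteq A)}$ between these two posets, and since the two structure maps of a Galois connection are antimonotonic, the resulting isomorphism is an anti-isomorphism of posets.

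No step is genuinely an obstacle once the structural results of the preceding sections are available; the only care required is to check that the epimorphic-canonical-map assumption of Propositions~\ref{prop:corollary-3.3} and~\ref{prop:closed} is inherited from the Hopf--Galois hypothesis on $A/B$ (it is, because the $Q$-canonical map factors as $A\otimes_BA\twoheadrightarrow A\otimes_{A^{co\,Q}}A\sir A\otimes Q$ through $\can\otimes\pi$), and to recognise that the target poset in the theorem statement is exactly the image of $\phi$ and hence the set of closed subalgebras.
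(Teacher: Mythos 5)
Your proposal is correct and follows essentially the same route as the paper, whose proof consists precisely of citing Proposition~\ref{prop:corollary-3.3}(1), Proposition~\ref{prop:closed} and Proposition~\ref{prop:iso}; you merely spell out the verification of the hypotheses (surjective canonical map, bijective antipode via Larson--Sweedler) and the identification of the closed subalgebras with $\Sub_{\textit{H\text{-}ext}}(B\subseteq A)$, which the paper leaves implicit.
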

\begin{proof}
	It follows directly from Proposition~\ref{prop:closed}
	Proposition~\ref{prop:corollary-3.3} (1) and
	Proposition~\ref{prop:iso}.
\end{proof}
\begin{remark}
	It follows that intermediate $H$-extensions form a complete lattice.
	Moreover, the poset of Hopf--Galois subextensions is dually isomorphic
	to the lattice of Hopf algebra quotients of $H$:
	\[\left\{A^{co\,H/I}\subseteq A:\,I\,-\,\text{Hopf ideal of }H\right\}\simeq\Quot(H).\] 
\end{remark}

This corollary implies part of the Galois Theory for finite field extensions
(the injectivity of the map $\Fix:\Sub(\Gal(\bE/\bF))\ir\Sub(\bE/\bF)$). The
main result in finite Hopf--Galois theory -- Theorem~\ref{thm:f-dim}, shows
that there is bijective correspondence with all $H$-subextensions. The map
$\psi$ constructed as an adjunction of $Q\mathop{\mapsto}\limits^\phi A^{co
    Q}$ is not given by an explicit formula in terms of the Hopf algebra
structure (it can be expressed using the order structure,
see~\eqref{eq:poset-formula}). However, in some important cases: the finite
Galois theory, the $k\subseteq H$ $H$-Hopf--Galois extension and cleft
extensions we will see an explicit Hopf algebraic formula.

In \cite[Theorem 2.3]{fo-yz:gal-cor-hopf-galois} there is proved an extension
of Chase--Sweedler theorem for commutative algebras which are comodule algebras
over finite dimensional Hopf algebra. Theorem~\ref{thm:f-dim} extends this
result to non-commutative algebras.

\section{Chase--Sweedler Theorem}
As a direct consequence of Theorem~\ref{thm:main} we get the Chase--Sweedler Theorem
for finite dimensional Hopf algebras. 
\begin{definition}
Let $H$ be a Hopf algebra over $k$. A $k$-algebra $A$ is an $H$-module algebra
if it is an $H$-module satisfying the following identity:
\[ 
h(a_1a_2)=h_{\mathit{(0)}}(a_1)h_{\mathit{(1)}}(a_2)
\] 
The subalgebra of invariants is defined as $A^H:=\{a\in A:\forall_{h\in H}
    ha=\epsilon(h)a\}$.
\end{definition}
 
Let $A/B$ be an extension of \(k\)-algebras and $A$ be an $H$-module algebra.
It will be called Hopf--Galois extension if and only if the canonical map
\[ 
\can:A\otimes_BA\mpr{}\Hom(H,A),\;a_1\otimes_B a_2\mapsto (h\mapsto a_1ha_2)
\] 
is a bijection.
\begin{theorem*}[{\cite[Chase--Sweedler]{sc-ms:hopf-algebras-and-galois-theory}}]
Let $A/B$ be a Galois extension such that it is a Hopf--Galois extension
under an action of a finite cocommutative Hopf algebra $H$. Then the following map is
injective and inclusion reversing:
\[ \Sub_{\textit{Hopf}}(H)\mpr{}\Sub_{\textit{field}}(A\subseteq B),\ H\selmap{} A^H \] 
\end{theorem*}
\noindent This theorem follows from Theorem~\ref{thm:f-dim} and the bijective
correspondence between right $H$-comodule structures and left
$H^*$-module structures on $A$, assuming that $H$ is finitely dimensional (see
\cite[Proposition~6.2.4]{sd-cn-sr:hopf-alg}). Note that Chase and Sweedler
proved this for algebras over rings. Furthermore, they classified
\(H\)-extensions.

In \cite[Theorem 2.3]{fo-yz:gal-cor-hopf-galois} there is proved an extension
of Chase--Sweedler theorem for commutative algebras which are comodule algebras
over finite dimensional Hopf algebra. Theorem~\ref{thm:f-dim} also extends this
result to non-commutative algebras.

\section{The \(H/k\)-\(H\)-extension}
In the case of an $H$-Hopf--Galois extension $k\subseteq H$ we show that
Theorem~\ref{thm:main} generalises
M.~Takeuchi's~\citeyearpar[Theorem~3]{mt:rel-hopf-mod},
A.~Masuoka's~\citeyearpar[Theorem~1.11]{am:quotient-theory-of-hopf-algebras} and
H.-J.~Schneider's~\citeyearpar[Theorem~1.4]{hs:exact-seq-qg} results.  They provide
a bijection between some (normal) Hopf ideals and some (conormal) Hopf
subalgebras of a given Hopf algebra. Below we cite these theorems merged
together. H.-J.~Schneider added the normality/conormality condition to the
A.~Masuoka result, who generalised the commutative case considered by
M.~Takeuchi. The definition of normal Hopf ideals and normal Hopf
subalgebras can be found in~\cite[Definition~1.1]{hs:exact-seq-qg}.
\begin{theorem}\label{thm:Takeuchi}
Let $H$ be a Hopf algebra. Then
\begin{equation}\label{eq:Takeuchi}
\begin{array}{ccc}
	\Bigg\{K\subseteq H:\,K\,\hyphen\begin{array}{l}\text{\small right coideal}\\\text{\small subalgebra}\\ H\ \text{\small f. flat over }K\end{array}\!\Bigg\}&\hspace{-.4cm}\galois{\psi}{\phi}&\hspace{-.4cm}\Bigg\{H/I:\,I\,\hyphen\begin{array}{l}\text{\small left ideal coideal}\\ H\ \text{\small f. coflat over }H/I\end{array}\!\Bigg\}\\
\end{array}
\end{equation}
\[\psi(K)=H/HK^+,\ \phi(H/I):=\,^{co\,H/I}H\] 
are inverse bijections. They restricts to normal/conormal elements: 
\begin{equation}\label{eq:Schneider} 
\begin{array}{ccc}
	\bigg\{K\subseteq H:\,K\,\hyphen\begin{array}{r}\text{\small normal sub-Hopf}\\\text{\small algebra}\\ H\ \text{\small f. flat over }K\end{array}\!\bigg\}&\hspace{-.4cm}\simeq&\hspace{-.4cm}\bigg\{H/I:\,I\,\hyphen\begin{array}{l}\text{\small normal Hopf ideal}\\ H\ \text{\small f. coflat over }H/I\end{array}\!\bigg\}\\
\end{array}
\end{equation}
\end{theorem}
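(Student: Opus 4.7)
The plan is to deduce Theorem~\ref{thm:Takeuchi} by restricting the Galois connection of Theorem~\ref{thm:main}, applied to the \(H\)-extension \(k\subseteq H\), to the sub-posets cut out by the faithful (co)flatness hypotheses. First, I apply Theorem~\ref{thm:main} to the \(H\)-comodule algebra \(A=H\) with its regular coaction; this produces the Galois connection displayed in~\eqref{eq:Galois-connection-for-Hopf-algebra-left}, between right coideal subalgebras \(K\subseteq H\) and generalised quotients \(H/I\) by left ideal coideals, with \(\phi(H/I)={}^{co\,H/I}H\).

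Second, I identify the abstractly defined adjoint \(\psi\) given by~\eqref{eq:poset-formula} with the explicit Hopf-algebraic formula \(\psi(K)=H/HK^+\) on right coideal subalgebras. The key maximality step is: given any left ideal coideal \(I\) with \(K\subseteq{}^{co\,H/I}H\), applying \(\id\otimes\epsilon\) to the coinvariance condition \(\Delta(k)-1\otimes k\in I\otimes H\) for \(k\in K^+\) yields \(k\in I\), hence \(K^+\subseteq I\); since \(I\) is a left ideal, \(HK^+\subseteq I\). The reverse inclusion \(K\subseteq{}^{co\,H/HK^+}H\) is a direct check using that \(K\) is a right coideal subalgebra.

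Third, I match closed elements to the faithful (co)flatness hypotheses. On the subalgebra side, if \(H\) is faithfully flat over the right coideal subalgebra \(K\), then~\cite[Remark~1.2]{hs:normal-bases} gives \(\phi\psi(K)={}^{co\,H/HK^+}H=K\), so \(K\) is closed. On the quotient side, if \(H\) is faithfully coflat over \(H/I\), the standard Takeuchi--Masuoka argument (or Proposition~\ref{prop:corollary-3.3}) shows that \({}^{co\,H/I}H\subseteq H\) is \((H/I)\)-Galois, and Proposition~\ref{prop:closed} then forces \(H/I\) to be closed. Proposition~\ref{prop:iso}, applied to these matched sub-posets of closed elements, produces the inverse bijections of~\eqref{eq:Takeuchi}.

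Finally, for the refinement~\eqref{eq:Schneider}, I verify that the bijection respects normality: for a Hopf subalgebra \(K\), normality is equivalent to \(HK^+=K^+H\) being a (two-sided) Hopf ideal, so \(\psi\) carries normal Hopf subalgebras to normal Hopf ideals, and conversely \(\phi\) carries normal Hopf ideals to normal Hopf subalgebras, as established in~\cite[Theorem~1.4]{hs:exact-seq-qg}. The main obstacle, within our framework, is the explicit identification of \(\psi\) from the abstract supremum formula~\eqref{eq:poset-formula}; the deeper technical inputs -- closedness under faithful flatness or coflatness -- are imported from~\cite{hs:normal-bases,hs:exact-seq-qg}, and the contribution of our Galois connection is to recognise these a priori disparate conditions as two manifestations of a single closure property.
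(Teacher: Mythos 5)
Your overall strategy is reasonable, but note first that the paper does not prove Theorem~\ref{thm:Takeuchi} at all: it is quoted from Takeuchi, Masuoka and Schneider (``Below we cite these theorems merged together''), and the paper only later remarks that Theorem~\ref{thm:newTakeuchi}(1) gives an alternative proof of Masuoka's part, \emph{using} the equivalence ``$H$ faithfully flat over $K$ $\Leftrightarrow$ $H$ faithfully coflat over $H/HK^+$'' from \cite[Proposition~4.5]{ps-hs:gen-hopf-galois}. Your steps 1--2 are fine and coincide with the paper's own verification in Theorem~\ref{thm:newTakeuchi}: the $\id\otimes\epsilon$ computation showing $HK^+\subseteq I$ whenever $K\subseteq{}^{co\,H/I}H$, together with $K\subseteq{}^{co\,H/HK^+}H$, correctly identifies $\psi(K)=H/HK^+$ as the value of the abstract adjoint~\eqref{eq:poset-formula}.

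The genuine gap is in the passage from ``both sides consist of closed elements'' to the inverse bijections~\eqref{eq:Takeuchi}. Proposition~\ref{prop:iso}(2) gives a bijection between \emph{all} closed subalgebras and \emph{all} closed quotients; to obtain a bijection between the two displayed sets you must also show that $\psi$ carries $\{K:\ H\text{ f.\ flat over }K\}$ into $\{H/I:\ H\text{ f.\ coflat over }H/I\}$ and that $\phi$ carries the latter into the former. Closedness of an element says nothing about where its image lands, and this flat/coflat transfer is precisely the nontrivial ingredient the paper points to (\cite[Proposition~4.5]{ps-hs:gen-hopf-galois}); your phrase ``matched sub-posets'' presupposes it rather than proves it. A secondary slip: Proposition~\ref{prop:corollary-3.3} cannot be the source of ``faithful coflatness of $H$ over $H/I$ implies ${}^{co\,H/I}H\subseteq H$ is $H/I$-Galois'' --- none of its cases (1)--(6) is plain faithful coflatness (case (4) also demands $H$ co-Frobenius); the correct input is \cite[Remark/Theorem~1.4]{hs:normal-bases}. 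Since that result, together with the flat/coflat equivalence, is essentially the substance of the Takeuchi--Masuoka theorem itself, your argument should be presented as a repackaging of these literature inputs through the Galois connection (as the paper does for Masuoka's part), not as an independent proof.
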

\noindent In \cite[Theorem~3.10]{ps:gal-cor-hopf-bigal} shows that for
a $k$-flat Hopf algebra $H$ the above bijective correspondence restricts to
(left, right) admissible objects (Definition~\ref{defi:admissible}) of right
and left hand sides.  We present the previous theorem in the same way as it
was originally stated, however in this paper we work in a dual setting than
H.-J.~Schneider and M.~Takeuchi:  
\begin{center}
\mbox{right/left coideal subalgebras $\galois{}{}$ quotients left/right module coalgebras}
\end{center}
All the results are true in both cases. We switch to the convention of
H.-J.~Schneider and M.~Takeuchi.
\begin{definition}
We let $\qsub(H)$ denote the poset of right coideals subalgebras of a Hopf algebra
$H$.
\end{definition}
\noindent If $H$ is commutative then right coideal subalgebras over which $H$ is
faithfully flat correspond to quotients of $\Spec(H)$ by an affine closed
subgroup scheme (thus the quotient is affine and a transitive $\Spec(H)$-set).
The poset $\qsub(H)$ has all infima, hence it has a unique structure of
a complete lattice. 
\begin{proposition}[{\citet[Proposition~1]{mt:rel-hopf-mod}}]\label{prop:Takeuchi}
	Let $H$ be a Hopf algebra and $I$ its left ideal coideal. Then
	$^{co\,H/I}H$ is a right coideal subalgebra of $H$. Let $K$ be a right
	coideal subalgebra of $H$. Then $HK^+$ is a left ideal coideal of $H$.
\end{proposition}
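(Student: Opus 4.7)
The plan is to unpack each of the two claims in Sweedler notation, where the one-sided hypotheses on $I$ and $K$ enter at exactly one step.

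For the first claim, set $N := {}^{co\,H/I}H$ and let $\pi: H \to H/I$ denote the quotient. An element $h$ lies in $N$ precisely when $\Delta(h) - 1 \otimes h \in I \otimes H$, the kernel of $\pi \otimes \id$. To show $N$ is closed under multiplication, I would write $\Delta(h) = 1 \otimes h + r_h$ and $\Delta(k) = 1 \otimes k + r_k$ with $r_h, r_k \in I \otimes H$. Multiplicativity of $\Delta$ gives
\[
        \Delta(hk) - 1 \otimes hk = (1 \otimes h)\,r_k + r_h\,(1 \otimes k) + r_h\,r_k.
\]
The first two summands manifestly lie in $I \otimes H$; the cross term $r_h r_k$ does too, but only because $H \cdot I \subseteq I$, which is exactly the hypothesis that $I$ is a left ideal. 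For the right coideal property, I would start from the coinvariance identity $\pi(h_{(1)}) \otimes h_{(2)} = \bar{1} \otimes h$, apply $\id \otimes \Delta$ to both sides, and use coassociativity to obtain
\[
        \pi(h_{(1)}) \otimes h_{(2)} \otimes h_{(3)} = \bar{1} \otimes h_{(1)} \otimes h_{(2)}.
\]
Since $k$ is a field, tensoring with $H$ is exact, hence $N \otimes H = \ker\bigl((\pi \otimes \id - \bar{1} \otimes \id) \otimes \id\bigr)$; the displayed equation then says precisely that $\Delta(h) \in N \otimes H$.

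For the second claim, $HK^+$ is obviously a left ideal, and $\epsilon$ vanishes on it since $K^+ \subseteq \ker \epsilon$. For the comultiplication, fix $h \in H$ and $k \in K^+$. The right coideal property of $K$ gives $\Delta(k) = k_{(1)} \otimes k_{(2)}$ with $k_{(1)} \in K$, so $k'_{(1)} := k_{(1)} - \epsilon(k_{(1)}) \cdot 1 \in K \cap \ker \epsilon = K^+$ (using that $K$ contains $1$). Via the counit identity $\epsilon(k_{(1)}) k_{(2)} = k$, one rewrites
\[
        \Delta(hk) = h_{(1)} k'_{(1)} \otimes h_{(2)} k_{(2)} + h_{(1)} \otimes h_{(2)} k,
\]
which is a sum of an element of $HK^+ \otimes H$ with an element of $H \otimes HK^+$.

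The only genuinely delicate step is the cross-term $r_h r_k$ in the subalgebra verification of the first claim: this is the unique point at which the left ideal hypothesis on $I$ is required. Everything else reduces to direct manipulation of Sweedler indices together with coassociativity and the counit identities.
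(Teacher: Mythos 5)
Your verification is correct. Note that the paper does not prove this statement at all --- it is quoted from Takeuchi (and the surrounding text only uses it to know that $\phi$ and $\psi$ are well defined) --- so there is no internal proof to compare against; your argument is the standard direct Sweedler-notation check, and you correctly isolate the one place where the left-ideal hypothesis $HI\subseteq I$ is needed (the cross term $r_hr_k$, whose first leg lies in $I\cdot I\subseteq HI$), while the right-coideal part of the first claim and the coideal part of the second follow from coassociativity, the counit identity, and flatness of $H$ over the base field, exactly as you write.
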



The following example is known (with the exception of point 2), but the
arguments are spread in the literature, thus we explain it more carefully.
\begin{example}\label{ex:1}
	Let $H$ be a Hopf algebra (possibly infinite dimensional) and let $K$
	be its right coideal subalgebra. Then $^{co\,H/HK^+}\!H\subseteq H$ is
	$H/HK^+$-Galois. The inverse of 
$$\can:H\otimes_{^{co\,H/K^+H}H}H\lmpr{}H/HK^+\otimes H,\ x\otimes y\elmap{} \ov x_{\mathit{(1)}}\otimes x_{\mathit{(2)}}y$$
	is given by: $\can^{-1}(\ov x\otimes y)=x_{\mathit{(1)}}\otimes
	S(x_{\mathit{(2)}})y$, where $\ov x$ is the class of $x$ in $H/HK^+$.
	This map is well defined since $K$ is a subcoalgebra: if $x=hk$, where
	$k\in K^+$ and $h\in H$, then
$$\begin{array}{cll}
      can^{-1}(\ov{hk}\otimes y)&=h_{\mathit{(1)}}k_{\mathit{(1)}}\otimes_{}S(k_{\mathit{(2)}})S(h_{\mathit{(2)}})y&\\
      &=h_{\mathit{(1)}}\otimes k_{\mathit{(1)}}S(k_{\mathit{(2)}})S(h_{\mathit{(2)}})y&\text{since }K\subseteq\, ^{co\,H/HK^+}\!H\\
				&=h_{\mathit{(1)}}\otimes\epsilon(k)S(h_{\mathit{(2)}})y\ =\ 0&\text{since }k\in \ker\,\epsilon
\end{array}$$
	\noindent where the tensor on the left hand side is over
	$\;^{co\,H/HK^+}\!H$.  Let us show that in fact
	\mbox{$K\subseteq\,^{co\,H/HK^+}\!H$:}
\begin{equation}\label{eq:1}
	\delta(k)=\ov k_{\mathit{(1)}}\otimes k_{\mathit{(2)}}=\ov 1\epsilon(k_{\textit{(1)}})\otimes k_{\mathit{(2)}}=\ov 1\otimes k
\end{equation}
	\noindent The second equality holds since $k_{\mathit{(1)}}\in K$ and
	the two maps $K\lmprr{\pi|_K}{k\mapsto\ov 1\epsilon(k)}H/HK^+$ have
	the same kernels equal $K^+$. Furthermore, $K=k1+K^+$ and both maps
	have the same value on $1$ thus they are equal. Moreover,
	$K=\,^{co\,H/HK^+}\!H$ in the following two cases:
\begin{enumerate}
    \item $H$ is left or right faithfully flat over $K$~\cite[see][Remark~1.2]{hs:normal-bases},
%
	\item $H$ is finite dimensional:
	    by~\cite[Thm.~6.1(ii)]{ss:projectivity-over-comodule-algebras}
	    \(H\) is free over \(K\) thus faithfully flat, now use~\((1)\).
	\item Moreover, in these two cases, if $K$ is a normal
		Hopf subalgebra then $HK^+=K^+H$ is a normal Hopf ideal, and if
		$I$ is a normal Hopf ideal then $^{co\,H/I}\!H=H^{co\,H/I}$ is
		a normal Hopf subalgebra what was originally proved by
		H.-J.~Schneider in~\cite[Lemma~1.3]{hs:exact-seq-qg} to
		show~\eqref{eq:Schneider}. 
\end{enumerate}
\end{example}

In the second case flatness is necessary as it shows the following example
which we cite after~\cite{ss:projectivity-over-comodule-algebras} (but used
rather for different purposes):
\begin{example}
	Let $F_1$ be the free group with one generator $g$ and $M_1$ its free
	submonoid generated by $g$. Then the Hopf algebra $H=k[F_1]$ and its
	coideal subalgebra $K=k[M_1]$ are such that $HK^+=HH^+$, then the
	above implies that \(H=K\) which is false but $k[F_1]$ is not
	faithfully flat over $k[M_1]$: $k[\bZ_n]\otimes_{k[M_1]}k[F_1]=0$,
	where $\bZ_n$ is the group of integers modulo $n$.
\end{example}

The two cases of the preceding Theorem lead us to two new results which
positively answers the question raised by~\citet{sm:hopf-alg}:
\emph{is correspondence~\eqref{eq:Takeuchi} a bijection without extra
    assumptions?} The above example gives a negative answer to this question,
however if \(H\) is finite dimensional over a field then~\eqref{eq:Takeuchi}
is a bijection without faithful flat/coflat assumptions, because they are
satisfied for every subobject.

\begin{theorem}\label{thm:newTakeuchi}
	Let $H$ be a Hopf algebra over a field $k$. Then $k\subseteq H$ is an
	$H$-Hopf--Galois extension and there exists a \textsf{Galois
	connection}:
	\begin{equation}\label{eq:galois-for-hopf-alg}
	    \begin{array}{ccc}
		\bigg\{K\subseteq H:\,K\,\hyphen\text{right coideal subalgebra}\bigg\}&\hspace{-.3cm}\galois{\psi}{\phi}&\hspace{-.3cm}\bigg\{H/I:\,I\,\hyphen\text{left ideal coideal}\bigg\}\\[-2mm]
		=:\qsub(H)&&=:\qquot(H)
	    \end{array}
	\end{equation}
	where $(\phi(Q)=\,^{co\,Q}H,\psi(K)=H/HK^+)$ is the Galois
	connection\footnote{Here we changed one side of the Galois connection
	comparing to Theorem~\ref{thm:main} but it doesn't make a difference.}
	obtained in Theorem~\ref{thm:main}. Moreover, this Galois
	correspondence restricts to normal elements:
	\[\begin{array}{ccc}
	    \bigg\{K\subseteq H:\,K\,\hyphen\text{normal Hopf subalgebra}\bigg\}&\hspace{-.3cm}\galois{\psi}{\phi}&\hspace{-.3cm}\bigg\{H/I:\,I\,\hyphen\text{normal Hopf ideal}\bigg\}\\[-2mm] 
	    =:\Sub_{\textit{nHopf}}(H)&&=:\Quot_{\textit{normal}}(H)
	\end{array}\]
We claim that:
\begin{enumerate}
	\item[(1)] $K\in\qsub(H)$ such, that $H$ is faithfully flat over $K$,
	    is a closed element of the Galois
	    connection~\eqref{eq:galois-for-hopf-alg}.
	\item[(2)] $Q\in\qquot(H)$ such that $H$ is faithfully coflat over $Q$
	    is a closed element of the Galois
	    connection~\eqref{eq:galois-for-hopf-alg}.
	\item[(3)] if $H$ is finite dimensional then $\phi$ and $\psi$ are
	    \textsf{inverse bijections}.
\end{enumerate}
\end{theorem}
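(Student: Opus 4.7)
The plan is to derive the theorem from Theorem~\ref{thm:main} specialized to the comodule algebra $A=H$ (with $H$ coacting on itself by its comultiplication), combined with the facts already recorded in Proposition~\ref{prop:Takeuchi} and Example~\ref{ex:1}. First I would verify that $k\subseteq H$ is $H$-Hopf--Galois: the canonical map $H\otimes H\mpr{}H\otimes H,\;x\otimes y\mapsto x_{\mathit{(1)}}\otimes x_{\mathit{(2)}}y$ is bijective with inverse $x\otimes y\mapsto x_{\mathit{(1)}}\otimes S(x_{\mathit{(2)}})y$; in particular $H^{co\,H}=k$, so Theorem~\ref{thm:main} produces a Galois connection between $\qquot(H)$ and $\Sub_{\textit{alg}}(H/k)$.

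Next I would observe, by Proposition~\ref{prop:Takeuchi}, that $\phi(Q)={}^{co\,Q}H$ factors through $\qsub(H)\hookrightarrow \Sub_{\textit{alg}}(H/k)$. To identify the right adjoint with the formula $\psi(K)=H/HK^+$, I would apply the uniqueness clause of Proposition~\ref{prop:iso}~(3): it suffices to check that $K\mapsto H/HK^+$ is antimonotone, lands in $\qquot(H)$ (Proposition~\ref{prop:Takeuchi}), and satisfies the Galois property. The inclusion $K\subseteq{}^{co\,H/HK^+}H$ is precisely~\eqref{eq:1} of Example~\ref{ex:1}; the dual inequality $Q\preceq H/H({}^{co\,Q}H)^+$ amounts to $H({}^{co\,Q}H)^+\subseteq\ker(H\twoheadrightarrow Q)$, which one reads off by applying $\id\otimes\epsilon$ to the equation defining ${}^{co\,Q}H$. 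The restriction to normal objects is then immediate from Schneider's observation reproduced in Example~\ref{ex:1}~(3): $\phi$ sends normal Hopf ideals to normal Hopf subalgebras and $\psi$ sends normal Hopf subalgebras to normal Hopf ideals.

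For the three numbered claims, claim~(1) is Example~\ref{ex:1}~(1) directly: faithful flatness of $H$ over $K$ gives $\phi\psi(K)={}^{co\,H/HK^+}H=K$ via~\cite[Remark~1.2]{hs:normal-bases}, so $K$ is closed. Claim~(2) is the dual statement; one can either cite Takeuchi--Masuoka (Theorem~\ref{thm:Takeuchi}), which asserts precisely that $Q$ with $H$ faithfully coflat over $Q$ satisfies $\psi\phi(Q)=Q$, or obtain it from Proposition~\ref{prop:closed} combined with the faithfully-coflat case of Proposition~\ref{prop:corollary-3.3}. Claim~(3), the finite-dimensional bijection, assembles both sides: every $K\in\qsub(H)$ is closed because $H$ is free, hence faithfully flat, over $K$ by Example~\ref{ex:1}~(2), so claim~(1) applies; every $Q\in\qquot(H)$ is closed by Proposition~\ref{prop:closed}, since Proposition~\ref{prop:corollary-3.3}~(1) guarantees that $H/H^{co\,Q}$ is $Q$-Galois in finite dimension.

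The main obstacle I anticipate is the identification of the abstract right adjoint produced by Theorem~\ref{thm:main} with the explicit Hopf-algebraic formula $\psi(K)=H/HK^+$; this is the only step that requires genuine structural computation rather than formal assembly of Proposition~\ref{prop:iso}, Proposition~\ref{prop:closed}, Proposition~\ref{prop:corollary-3.3}, and Example~\ref{ex:1}. Once that identification is in place, normality, the closedness claims~(1)--(2), and the finite-dimensional bijection~(3) follow by citation.
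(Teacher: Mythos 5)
Your proposal is correct and follows essentially the same route as the paper: Proposition~\ref{prop:Takeuchi} for well-definedness, equation~\eqref{eq:1} for $K\subseteq{}^{co\,H/HK^+}\!H$, the counit computation for $H({}^{co\,Q}H)^+\subseteq I$, uniqueness via Proposition~\ref{prop:iso}~(3) to identify the connection with that of Theorem~\ref{thm:main}, Schneider's result for normality, Example~\ref{ex:1}~(1)--(2) together with \cite[Remark~1.2]{hs:normal-bases} for claims (1) and the $K$-side of (3), Theorem~\ref{thm:Takeuchi} for (2), and Proposition~\ref{prop:closed} with Proposition~\ref{prop:corollary-3.3}~(1) for the $Q$-side of (3). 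The only caveat is your proposed alternative for claim~(2): Proposition~\ref{prop:corollary-3.3} has no case assuming only faithful coflatness over $Q$ (its coflatness cases additionally require co-Frobenius or enough integrals), so the citation of Theorem~\ref{thm:Takeuchi} --- which is what the paper uses --- is the route that actually works there.
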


The above theorem we put it here for the sake of completeness. In the proof we
use Proposition~\ref{prop:mono}, which might be avoided using P.Schauenburg
results (see~\cite[Theorem~3.10]{ps:gal-cor-hopf-bigal}). Point~(1) gives an
alternative proof of the A.~Masuoka's part of Theorem~\ref{thm:Takeuchi}
observing that $H$ is faithfully flat over a right coideal subalgebra $K$ if
and only if $H$ is faithfully coflat over $H/HK^+$ (for Hopf algebra $H$ over
a field $k$). We refer to~\cite[Proposition 4.5]{ps-hs:gen-hopf-galois} for
the proof of this observation. Point~(3) uses the S.~Skryabin
result~\cite[Theorem~6.1]{ss:projectivity-over-comodule-algebras}. We will
also see that the presented method will shed more light in the infinite
dimensional case of S.~Montgomery question.

\begin{proof}
	Proposition~\ref{prop:Takeuchi} shows that both maps $\phi$ and $\psi$
	are well defined. Equation~\eqref{eq:1} shows that
	$K\subseteq\phi\psi(K)=\,^{co\,H/HK^+}\!H$ thus to obtain that $(\phi,
	\psi)$ is a Galois connection it remains to prove that
	$H/I\leq\psi\phi(H/I)=H/H(^{co\,H/I}\!H)^+$, i.e.  $I\supseteq
	H(^{co\,H/I}\!H)^+$. Let $x\in (^{co\,H/I}\!H)^+$ then 
	\[\Delta(x)=x_{\mathit{(1)}}\otimes x_{\mathit{(2)}}=1\otimes x\;+\;\sum_k\,i_k\otimes x_k\]
	where $i_k\in I,\; x_k\in H,\;k=1,\dots,n$, thus 
	\begin{align*}
		x\;	&=\;1\epsilon(x)\;+\;\sum_k\,i_k\epsilon(x_k)	&\\
			&=\;\sum_k\,i_k\epsilon(x_k)\in I		&\text{since }x\in \ker\epsilon
	\end{align*}
	This Galois connection is the same as~\eqref{eq:main} in
	Theorem~\ref{thm:main}, because of the uniqueness of Galois maps
	(Proposition~\ref{prop:iso}~(3)). The minor difference is the codomain
	of $\phi$: here it is  $\qsub(H)$ rather than
	$\Sub_{\textit{alg}}(k\subseteq H)$ as in Theorem~\ref{thm:main}
	according to the case $A=H$. The map $\phi$ restricts to normal
	elements as shown by~\cite{hs:exact-seq-qg}. In the case of finite
	dimensional Hopf algebras every $Q$-extension $H^{co\,Q}\subseteq H$
	is $Q$-Galois (Proposition~\ref{prop:corollary-3.3}). From
	Proposition~\ref{prop:closed} we get that $\phi$ is a monomorphism and
	so by general properties of Galois connections:
	\[\psi\circ\phi(Q)=H/H(^{co\,Q}H)^+=Q\]
	\noindent Moreover, in any of the two cases: $H$ is finite dimensional
	or faithfully flat over $K$, we have
	\[\phi\circ\psi(K)=\,^{co\,H/HK^+}\!H=K\]
	\noindent Thus in fact $K$ is closed element of the Galois connection.
	Point (2) follows from Theorem~\ref{thm:Takeuchi}.
\end{proof}

In this setting we can prove inverse of Proposition~\ref{prop:closed}, and thus
obtain a full characterisation of closed elements of $\qquot(H)$.
\begin{proposition}\label{prop:closed-of-qquot}
	Let $H$ be a Hopf algebra. Then $Q\in\qquot(H)$ is a \textsf{closed
	element} of the Galois connection~\eqref{eq:galois-for-hopf-alg} if and
	only if $H/\,^{co\,Q}H$ is a Hopf--Galois extension.
\end{proposition}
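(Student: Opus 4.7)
\noindent The plan is to handle the two implications of the equivalence separately, relying almost entirely on material already established earlier in the paper. The forward direction---if $H/{}^{co\,Q}H$ is $Q$-Galois, then $Q$ is closed---will be a direct application of Proposition~\ref{prop:closed} to the $H$-comodule algebra $A=H$. First I would verify its hypothesis: the canonical map $H\otimes H\to H\otimes H$, $x\otimes y\mapsto x_{(1)}\otimes x_{(2)}y$, is bijective with inverse $x\otimes y\mapsto x_{(1)}\otimes S(x_{(2)})y$, so the $H$-extension $k\subseteq H$ is itself $H$-Hopf--Galois, and in particular has surjective canonical map. Proposition~\ref{prop:closed} then yields immediately that $Q$ is closed.

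For the converse I would assume $Q\in\qquot(H)$ is closed, i.e.\ $\psi\phi(Q)=Q$, and set $K:=\phi(Q)={}^{co\,Q}H$, which is a right coideal subalgebra of $H$ by Proposition~\ref{prop:Takeuchi}. Closedness rewrites as the equality $Q=\psi(K)=H/HK^+$ in $\qquot(H)$, from which one also reads off the identification ${}^{co\,H/HK^+}H={}^{co\,Q}H=K$. This places us exactly in the setting of Example~\ref{ex:1} applied to the right coideal subalgebra $K$, whose content is that ${}^{co\,H/HK^+}H\subseteq H$ is $H/HK^+$-Galois with explicit inverse to the canonical map given by
\[
\can^{-1}(\bar{x}\otimes y)=x_{(1)}\otimes S(x_{(2)})y.
\]
Reinterpreting through $H/HK^+=Q$ and $K={}^{co\,Q}H$, this is precisely the assertion that $H/{}^{co\,Q}H$ is $Q$-Galois, as required.

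I do not anticipate any serious obstacle in this argument since both directions reduce directly to results already proved. The only conceptual point worth stressing is that Example~\ref{ex:1} always delivers a Hopf--Galois structure on the extension ${}^{co\,H/HK^+}H\subseteq H$, with an a priori larger base than $K$; closedness of $Q$ is exactly the condition which collapses ${}^{co\,H/HK^+}H$ back to $K={}^{co\,Q}H$, and it is this identification that transports the Hopf--Galois property from the example to the extension $H/{}^{co\,Q}H$ that appears in the statement.
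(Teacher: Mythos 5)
Your proof is correct and follows essentially the same route as the paper: the forward implication is Proposition~\ref{prop:closed} applied to the $H$-Hopf--Galois extension $k\subseteq H$ (which the paper leaves implicit via ``it is enough to show\dots''), and the converse rests on the explicit antipode inverse $\overline{x}\otimes y\mapsto x_{\mathit{(1)}}\otimes S(x_{\mathit{(2)}})y$ of the canonical map. The only cosmetic difference is that you cite Example~\ref{ex:1} (with base $^{co\,H/HK^+}\!H$, identified with $K=\,^{co\,Q}H$ by closedness), whereas the paper re-derives the same isomorphism directly over $K$ as~\eqref{eq:canK}.
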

\begin{proof}
	It is enough to show that if $Q$ is closed then $^{co Q}H\subseteq H$
	is an $H$-Hopf--Galois. If $Q$ is closed then $Q=H/H(^{co Q}H)^+$. One
	can show that for any $K\in\qsub(H)$ the following map is an
	isomorphism:
	\begin{equation}\label{eq:canK}
 		H\otimes_{K}H\mpr{}H/HK^+\otimes H,\quad h\otimes k\elmap{}h_{\mathit{(1)}}\otimes h_{\mathit{(2)}}k
	\end{equation}
	Its inverse is given by $H/HK^+\otimes H\ni\overline h\otimes
	k\,\elmap{}\,h_{\mathit{(1)}}\otimes S(h_{\mathit{(2)}})k\in
	H\otimes_K H$ which is well defined because $\Delta(K)\subseteq
	K\otimes H$. Plugging $K=\,^{co\,Q}H$ to equation~\eqref{eq:canK} we
	observe that this map is the canonical map of $Q$. Thus
	$H/\,^{co\,Q}H$ is a $Q$-Galois extension.
\end{proof}
Combining the above, Proposition~\ref{prop:mono} and
Proposition~\ref{prop:iso}(4) we get: 
\begin{corollary}
	Let $H$ be a finite Hopf algebra. Then every $Q\in\qquot(H)$ is
	$Q$-Galois.
\end{corollary}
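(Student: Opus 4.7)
The plan is to reformulate the claim via Proposition~\ref{prop:closed-of-qquot} and then read it off from Theorem~\ref{thm:newTakeuchi}(3).

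First, I would use Proposition~\ref{prop:closed-of-qquot}: the extension $H/\,^{co\,Q}H$ is $Q$-Galois if and only if $Q$ is a \emph{closed} element of the Galois connection~\eqref{eq:galois-for-hopf-alg}. Thus the corollary reduces to showing that every $Q\in\qquot(H)$ is closed whenever $H$ is finite-dimensional.

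Next, I would invoke Theorem~\ref{thm:newTakeuchi}(3): for finite-dimensional $H$ the maps $\phi$ and $\psi$ of~\eqref{eq:galois-for-hopf-alg} are inverse bijections, so in particular $\psi$ is onto. By Proposition~\ref{prop:iso}(1) the closed elements of $\qquot(H)$ coincide with $\psi(\qsub(H))$, hence every $Q\in\qquot(H)$ is closed. An equivalent route, matching the phrasing ``combining the above, Proposition~\ref{prop:mono} and Proposition~\ref{prop:iso}(4)'', is to apply Proposition~\ref{prop:mono} to the Hopf--Galois extension $k\subseteq H$ (whose canonical map is bijective) so that $\phi$ is injective on the subset of $Q$-Galois quotients, and then invoke Proposition~\ref{prop:iso}(4) to translate injectivity of $\phi$ into surjectivity of $\psi$, again concluding that every $Q$ is closed.

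I do not anticipate a genuine obstacle here: once the translation through Proposition~\ref{prop:closed-of-qquot} is made, the rest is formal Galois-connection bookkeeping. The only hypothesis to check is that the canonical map of $k\subseteq H$ is bijective, which is automatic because $H/k$ is $H$-Hopf--Galois by construction.
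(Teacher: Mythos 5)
Your primary route is correct and is essentially the paper's: Proposition~\ref{prop:closed-of-qquot} converts ``$^{co\,Q}H\subseteq H$ is $Q$-Galois'' into ``$Q$ is closed'', and closedness of every $Q\in\qquot(H)$ in the finite-dimensional case is read off from Theorem~\ref{thm:newTakeuchi}(3) (the maps $\phi,\psi$ of~\eqref{eq:galois-for-hopf-alg} are inverse bijections, so $\psi$ is onto and Proposition~\ref{prop:iso}(1) applies). Be aware, though, of where finite-dimensionality actually enters: the proof of Theorem~\ref{thm:newTakeuchi}(3) invokes Proposition~\ref{prop:corollary-3.3}(1), which for $A=H$ is literally the statement of this corollary, together with Skryabin's freeness result (Example~\ref{ex:1}(2)); so this derivation (like the paper's own one-line ``combining'' remark) is really a repackaging of the cited result rather than an independent proof.

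Your proposed ``equivalent route'' is not equivalent and has a genuine gap. Proposition~\ref{prop:mono} applied to $k\subseteq H$ gives injectivity of $\phi$ only on the subset of those $Q$ for which $^{co\,Q}H\subseteq H$ is already $Q$-Galois, whereas Proposition~\ref{prop:iso}(4) turns into surjectivity of $\psi$ only the injectivity of $\phi$ on \emph{all} of $\qquot(H)$. Knowing that the $Q$-Galois quotients exhaust $\qquot(H)$ is precisely what is being proved, so this route is circular. A telltale sign is that it never uses finite-dimensionality: if it worked, it would establish condition (1) of Proposition~\ref{prop:smontgomery} for an arbitrary Hopf algebra and thereby essentially settle Montgomery's question, which the paper treats as open. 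Stick to the first route (or cite Proposition~\ref{prop:corollary-3.3}(1) with $A=H$ directly), and drop the second.
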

Furthermore, the following proposition holds:
\begin{proposition}\label{prop:smontgomery}
	Let \(H\) be a Hopf algebra. Then there is a bijective correspondence:
	\[ \bigg\{K\subseteq H:\,K\,-\text{right coideal subalgebra}\bigg\}\galois{\simeq}{}\bigg\{H/I:\,I\,-\text{left ideal coideal}\bigg\} \]
	if and only if 
	\begin{enumerate}[topsep=0pt,noitemsep] 
	    \item for every its generalised quotient $Q$ the extension
		$^{co\,Q}H\subseteq H$ is $Q$-Galois
	    \item $\;^{co\,H/K^+H}\!H\subseteq K$ for every right coideal
		subalgebra $K$ of $H$.
	\end{enumerate}
\end{proposition}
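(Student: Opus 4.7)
The plan is to recognise the claimed bijection as the assertion that \emph{every} element on both sides of the Galois connection of Theorem~\ref{thm:newTakeuchi} is closed, and then translate that closure condition into the two hypotheses (1) and (2). By Proposition~\ref{prop:iso}, parts~(1), (2) and~(5), the Galois connection $\phi\colon\qquot(H)\rightleftarrows\qsub(H)\colon\psi$ with $\phi(Q)={^{co\,Q}H}$ and $\psi(K)=H/HK^+$ is a pair of mutually inverse bijections if and only if $\psi\phi(Q)=Q$ for every $Q\in\qquot(H)$ and $\phi\psi(K)=K$ for every $K\in\qsub(H)$. So it suffices to show that condition~(1) is equivalent to the first equality holding for all $Q$ and condition~(2) is equivalent to the second equality holding for all $K$.

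For the quotient side I would invoke Proposition~\ref{prop:closed-of-qquot} directly: it asserts that $Q\in\qquot(H)$ is a closed element of the Galois connection if and only if ${^{co\,Q}H}\subseteq H$ is a $Q$-Galois extension. Hence the closure condition $\psi\phi(Q)=Q$ holding for all $Q\in\qquot(H)$ is equivalent to hypothesis~(1).

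For the subalgebra side, unravelling the definitions gives $\phi\psi(K)={^{co\,H/HK^+}H}$, and one inclusion is automatic: the short computation carried out in equation~(\ref{eq:1}) of Example~\ref{ex:1} shows that $\delta(k)=\overline{1}\otimes k$ in $H/HK^+\otimes H$ for every $k\in K$, so $K\subseteq{^{co\,H/HK^+}H}$ always. Consequently the closure condition $\phi\psi(K)=K$ holds for every right coideal subalgebra $K$ if and only if the reverse inclusion ${^{co\,H/HK^+}H}\subseteq K$ holds, which—modulo the harmless left/right convention switch $HK^+\leftrightarrow K^+H$ already signalled in the discussion immediately preceding the statement—is exactly hypothesis~(2). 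Combining the two equivalences yields the claimed biconditional. There is no genuine obstacle here: the entire argument is a repackaging of Proposition~\ref{prop:iso}, Proposition~\ref{prop:closed-of-qquot} and Example~\ref{ex:1}, and the only point needing care is the bookkeeping of conventions between right coideal subalgebras and the appropriate one-sided ideal generated by $K^+$.
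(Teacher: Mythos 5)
Your proposal is correct and matches the paper's own argument in all essentials: both reduce the bijection to the two closure conditions $\psi\phi=\id$ and $\phi\psi=\id$ via the general properties of Galois connections, handle the quotient side through Proposition~\ref{prop:closed-of-qquot} (the paper phrases this via Proposition~\ref{prop:mono} and injectivity of $\phi$, but that is the same machinery packaged in Proposition~\ref{prop:closed-of-qquot}), and handle the subalgebra side using the automatic inclusion $K\subseteq{^{co\,H/HK^+}}H$ from equation~\eqref{eq:1}. No gaps; the convention remark about $HK^+$ versus $K^+H$ is handled appropriately.
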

\begin{proof}
    The pair of maps:
    \begin{center}
	\begin{tikzpicture}
	    \matrix[matrix of math nodes, column sep=2cm, row sep=.4cm]{
	    |(A)| \qsub(H)	&|(B)| \qquot(H)\\};
	    \node[anchor=east] at (-.8cm,-.9cm) {$K$};
	    \node[anchor=west] at (.8cm,-.9cm) {$H/HK^+$};
	    \node[anchor=east] at (-.8cm,-1.3cm) {$^{co\,Q}H$};
	    \node[anchor=west] at (.8cm,-1.3cm) {$Q$};
	    \begin{scope}
		\draw[->] ($(A)+(1.2cm,.1cm)$) -- node[above]{$\psi$} ($(B)+(-1.3cm,.1cm)$);
		\draw[<-] ($(A)+(1.2cm,-.1cm)$) -- node[below]{$\phi$} ($(B)+(-1.3cm,-.1cm)$);
		\draw[|->] (-.8cm,-.9cm) -- (.8cm,-.9cm);
		\fill[color=white] (0,-.9cm) circle (.2cm);
		\node at (0,-.9cm) {$\psi$};
		\draw[<-|] (-.8cm,-1.3cm) -- (.8cm,-1.3cm);
		\fill[color=white] (0,-1.3cm) circle (.2cm);
		\node at (0,-1.3cm) {$\phi$};
	    \end{scope}
	\end{tikzpicture}
    \end{center}
    is a Galois connection. The Proposition~\ref{prop:mono} and the above
    result shows that $\phi$ is a monomorphism and thus by the Galois property
    $\psi\phi=\id$. In the presence of a Galois correspondence, the equality
    $\phi\psi=\id$ is equivalent to the inclusion $^{co\,H/K^+H}H\subseteq K$.
\end{proof}

\section{Cleft extensions}
Let us introduce cleft extensions:
\begin{definition}
    An \(H\)-extension \(A/B\) is called \textsf{cleft} if there exists
    a convolution invertible \(H\)-comodule map \(\gamma:H\sir A\). 
\end{definition}
Here we give definition of the normal basis property for $H$-extensions.
\begin{definition}
	Let $B\subseteq A$ be an $H$-extension. Then it has the \textsf{normal
	    basis property} if and only if $A$ is isomorphic to $B\otimes_kH$ as left
	$B$-module and right $H$-comodule.
\end{definition}
\citet[Example 8.2.2]{sm:hopf-alg} shows that an extension of
fields has the classical normal basis property if and only if it has the above
property. There is a characterisation of cleft extensions due to Y.~Doi and M.~Takeuchi
(\cite[see also][]{rb-sm:crossed-products}):
\begin{theorem}[\cite{yd-mt:cleft-comodule-algebras}]
	Let $B\subseteq A$ be an $H$-extension. Then it is \textsf{cleft} if
	and only if it is a Hopf--Galois extension with normal basis property.
\end{theorem}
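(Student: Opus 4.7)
The plan is to prove the two implications separately, producing explicit formulas in each direction, with the real work concentrated in the converse.

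For the implication cleft $\Rightarrow$ Hopf--Galois with normal basis property, assume $\gamma\colon H\to A$ is a convolution-invertible $H$-comodule map with convolution inverse $\gamma^{-1}$. First I would define the candidate normal-basis map
\[
    \Psi\colon B\otimes_k H\to A,\qquad b\otimes h\mapsto b\gamma(h),
\]
together with the candidate inverse $a\mapsto a_{(0)}\gamma^{-1}(a_{(1)})\otimes a_{(2)}$. Verifying that the first tensor factor actually lies in $B=A^{co\,H}$ is a short comodule computation using that $\gamma$ is a comodule map, that $\gamma^{-1}$ is a comodule map for the antipode-twisted coaction, and that $\gamma * \gamma^{-1}=u\epsilon$; both compositions $\Psi\Psi^{-1}$ and $\Psi^{-1}\Psi$ then collapse to the identity by the convolution identity and the counit axiom, yielding the normal basis property. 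For the Hopf--Galois part I would write down
\[
    \can^{-1}(a\otimes h)=a\gamma^{-1}(h_{(1)})\otimes_B \gamma(h_{(2)}),
\]
and verify both compositions with $\can$ using the same two ingredients.

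For the converse, suppose $\theta\colon B\otimes_k H\to A$ is a normal-basis isomorphism of left $B$-modules and right $H$-comodules, and set $\gamma:=\theta\circ(u_B\otimes\id_H)$, i.e.\ $\gamma(h)=\theta(1\otimes h)$. That $\gamma$ is a right $H$-comodule map is immediate from the right $H$-colinearity of $\theta$. The essential remaining task is to manufacture a convolution inverse $\gamma^{-1}$. For this I would exploit the Hopf--Galois hypothesis via the translation map $\tau\colon H\to A\otimes_B A$, $\tau(h)=\can^{-1}(1_A\otimes h)=:h^{[1]}\otimes_B h^{[2]}$, transport it through $\theta^{-1}\otimes_B\theta^{-1}$ into a more tractable object inside the isomorphic module $B\otimes H\otimes H$, and then extract an appropriate ``$B$-valued component'' to define $\gamma^{-1}\colon H\to A$. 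Convolution invertibility would be forced by the defining identity $h^{[1]}h^{[2]}_{(0)}\otimes h^{[2]}_{(1)}=1_A\otimes h$ of $\can^{-1}$, combined with the way $\gamma$ sits inside $A$ via $\theta$.

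The main obstacle is precisely the construction and verification of $\gamma^{-1}$ in the converse direction: one must repackage the information in $\can^{-1}$, which a priori only produces tensors over $B$, into an honest $k$-linear map $H\to A$ whose convolution with $\gamma$ equals $u\epsilon$. A structurally cleaner alternative I would pursue in parallel is to argue first that under the normal-basis iso the algebra $A$ is necessarily a crossed product $B\#_\sigma H$ for some convolution-invertible $2$-cocycle $\sigma\colon H\otimes H\to B$ (extracted from the failure of $\gamma$ to be multiplicative); in that picture the cleaving is the tautology $h\mapsto 1\#h$, and the Hopf--Galois hypothesis enters precisely to guarantee convolution invertibility of $\sigma$, thereby producing $\gamma^{-1}$ via $\gamma^{-1}(h)=\sigma^{-1}(S(h_{(1)}),h_{(2)})$ applied in the crossed-product picture.
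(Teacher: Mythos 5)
The paper itself offers no proof of this statement --- it is quoted verbatim from Doi--Takeuchi --- so your attempt can only be measured against the classical argument. Your forward direction (cleft $\Rightarrow$ Galois with normal basis) is that standard argument and is essentially complete, granted the standard lemma, which you correctly flag, that the convolution inverse of a colinear map is colinear for the antipode-twisted coaction. The problem is the converse, which is the real content of the theorem, and there your text stops at exactly the decisive point: you never write a formula for $\gamma^{-1}$, and you verify neither convolution identity, asserting instead that invertibility ``would be forced'' by the single identity $h^{[1]}h^{[2]}_{(0)}\otimes h^{[2]}_{(1)}=1\otimes h$. It is not: to make your plan work one should set $\mu:=(\id_B\otimes\epsilon)\circ\theta^{-1}\colon A\to B$ (a left $B$-linear map) and define
\begin{equation*}
	\gamma^{-1}(h):=h^{[1]}\,\mu\bigl(h^{[2]}\bigr),
\end{equation*}
which is well defined over $\otimes_B$ precisely because $\mu$ is left $B$-linear. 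Then $\gamma^{-1}\ast\gamma=u\epsilon$ uses, besides the identity you quote, the colinearity identity $h_{(1)}{}^{[1]}\otimes_B h_{(1)}{}^{[2]}\otimes h_{(2)}=h^{[1]}\otimes_B h^{[2]}{}_{(0)}\otimes h^{[2]}{}_{(1)}$ together with the observation that $y\mapsto\mu(y_{(0)})\gamma(y_{(1)})$ is the identity of $A$ (this encodes how $\gamma$ ``sits inside $A$ via $\theta$''); and $\gamma\ast\gamma^{-1}=u\epsilon$ needs the further translation-map identity $a_{(0)}a_{(1)}{}^{[1]}\otimes_B a_{(1)}{}^{[2]}=1\otimes_B a$ (proved by applying the injective map $\can$), taken at $a=\gamma(h)$ and composed with $x\otimes_B y\mapsto x\mu(y)$. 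None of these steps appear in your write-up, and your packaging via ``$\theta^{-1}\otimes_B\theta^{-1}$ into $B\otimes H\otimes H$'' is not even well posed, since $B\otimes H$ has no right $B$-module structure other than the one transported back from $A$.

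Your fallback route is worse than incomplete: it is circular. The statement that a Hopf--Galois extension with normal basis is a crossed product $B\#_\sigma H$ with \emph{convolution-invertible} cocycle is, in the literature and in substance, equivalent to cleftness, and the invertible $\sigma$ is extracted from an already convolution-invertible cleaving map; saying that the Galois hypothesis ``enters precisely to guarantee convolution invertibility of $\sigma$'' merely renames the quantity you were asked to construct. So: right strategy in the converse, but the construction and two-sided verification of $\gamma^{-1}$ --- the heart of the theorem --- are missing, and the proposed alternative cannot serve as a substitute.
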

\noindent Theorem~\ref{thm:newTakeuchi} together with
Proposition~\ref{prop:closed-of-qquot} yields the following result. 
\begin{theorem}\label{thm:cleft-case}
    Let \(A/B\) be an \(H\)-cleft extension. Then an element $Q$ of
    $\qquot(H)$ is \textsf{closed} in the Galois connection~\eqref{eq:main} if
    and only if the extension $A/A^{co Q}$ is \textsf{$Q$-Galois}. When $H$ is
    finite dimensional then there is a \textsf{bijective correspondence}: 
    \[\Sub_{\textit{H\text{-}ext}}(A)\simeq\qquot(H)\]
\end{theorem}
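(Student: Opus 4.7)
My plan is to prove the biconditional by reducing the cleft case to the tautological extension $k\subseteq H$, where Proposition~\ref{prop:closed-of-qquot} already characterises closedness. The ``if'' direction is immediate from Proposition~\ref{prop:closed}: cleft extensions are Hopf--Galois, so their canonical map is surjective, and whenever $A/A^{co Q}$ is $Q$-Galois the element $Q$ is automatically closed. All the content is in the converse.

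The reduction to $k\subseteq H$ proceeds through the Doi--Takeuchi normal-basis isomorphism $B\otimes_k H\sir A$, $b\otimes h\mapsto b\gamma(h)$, which is left $B$-linear and right $H$-colinear. Combined with the transported coaction $\delta(b\otimes h)=b\otimes h_{(1)}\otimes h_{(2)}$, a component-wise computation identifies $A^{co Q}$ with $B\otimes_k H^{co Q}$ as a left $B$-submodule of $A$, for every $Q\in\qquot(H)$. Faithfulness of $\otimes_k$ on subspaces then yields the order-equivalence $A^{co Q_1}\subseteq A^{co Q_2}\Leftrightarrow H^{co Q_1}\subseteq H^{co Q_2}$, whence the poset formula~\eqref{eq:poset-formula} produces the same composite $\psi\circ\phi$ for the Galois connection~\eqref{eq:main} attached to $A/B$ and to $H/k$. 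Hence $Q$ is closed for $A/B$ if and only if it is closed for $H/k$, and by the right-handed analog of Proposition~\ref{prop:closed-of-qquot} this amounts to $H/H^{co Q}$ being $Q$-Galois.

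It then remains to lift $Q$-Galoisness from $H$ to $A$. Imitating Example~\ref{ex:1}, I would propose the inverse $\beta:A\otimes Q\to A\otimes_{A^{co Q}}A$, $a\otimes\bar h\mapsto a\gamma^{-1}(h_{(1)})\otimes_{A^{co Q}}\gamma(h_{(2)})$. The identities $\can_Q\circ\beta=\id$ and $\beta\circ\can_Q=\id$ are straightforward Hopf--Galois computations once $\beta$ is well-defined; the genuine content is to check that $\beta$ descends through $I=\ker(H\to Q)$, i.e.\ that $\gamma^{-1}(i_{(1)})\otimes_{A^{co Q}}\gamma(i_{(2)})=0$ in $A\otimes_{A^{co Q}}A$ for every $i\in I$. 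For the base case $A=H$ with $\gamma=\id$ and $\gamma^{-1}=S$, this reduces to $S(i_{(1)})\otimes_{H^{co Q}}i_{(2)}=0$, which follows at once by applying the bijective $\can_Q^H$ to obtain $1\otimes\bar i=0$ and using injectivity. The main obstacle is transporting this vanishing along the normal-basis isomorphism: $\gamma$ is not an algebra morphism and $\gamma^{-1}$ is $H$-colinear only in a twisted sense, so the relation must be pushed through the crossed-product decomposition of $A$ using the cocycle identity for $\gamma$, the coideal property of $I$, and the inclusion $\gamma(H^{co Q})\subseteq A^{co Q}$.

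For the finite-dimensional part of the statement, Proposition~\ref{prop:corollary-3.3}(1) applied to the surjective canonical map of the cleft (hence Hopf--Galois) extension $A/B$ shows that every $Q\in\qquot(H)$ makes $A/A^{co Q}$ a $Q$-Galois extension. By the first half of the theorem every such $Q$ is then closed, and Proposition~\ref{prop:iso}(2) promotes the Galois connection~\eqref{eq:main} to the asserted bijection $\Sub_{\textit{H\text{-}ext}}(A)\simeq\qquot(H)$.
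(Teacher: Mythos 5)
Your proposal is correct and follows essentially the same route as the paper's (very terse) proof: the Doi--Takeuchi normal-basis description $A\cong B\otimes H$, the identification $A^{co\,Q}=B\otimes H^{co\,Q}$ (equivalently the paper's formula $\psi(B\otimes K)=H/K^+H$) to transfer closedness to the extension $k\subseteq H$, then Proposition~\ref{prop:closed-of-qquot} together with Proposition~\ref{prop:closed}, and general Galois-connection facts plus finite dimensionality for the bijection. The step you flag as the main obstacle --- that $\gamma^{-1}(i_{(1)})\otimes_{A^{co\,Q}}\gamma(i_{(2)})=0$ for $i\in(H^{co\,Q})^+H$ --- does close with exactly the tools you name: write $i=kh$ with $k\in(H^{co\,Q})^+$, expand $\gamma^{-1}(k_{(1)}h_{(1)})$ and $\gamma(k_{(2)}h_{(2)})$ by the cocycle identity, slide the resulting $\sigma^{\pm1}$-factors (which lie in $B\subseteq A^{co\,Q}$) and then $\gamma(k_{(2)})\in\gamma(H^{co\,Q})\subseteq A^{co\,Q}$ across the tensor sign, and finish with $\gamma^{-1}(k_{(1)})\gamma(k_{(2)})=\epsilon(k)=0$.
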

\noindent The normality condition can be added to both sides as it is done in
Theorem~\ref{thm:newTakeuchi}. The first part is a consequence of Takeuchi
characterisation of cleft extensions, the formula \(\psi(B\otimes K)=H/K^+H\)
for \(B\otimes K\subseteq A=B\otimes H\), where \(K\) is right ideal
subalgebra of \(H\) and Proposition~\ref{prop:closed-of-qquot}. The bijection
is a direct consequence of properties of Galois connections and
Theorem~\ref{thm:newTakeuchi}(3). 
\section{BiGalois extensions}
In this section we show that (left, right) admissible quotients of \(H\) and
\(L(A,H)\) classifies the same subextensions of \(A\).

In~\cite{ps:hopf-bigalois} there is constructed a Hopf algebra $L(A,H)$ for
a given $k\subseteq A$ $H$-Hopf--Galois extension with the property that
$k\subseteq A$ is $(L(A,H),H)$-biGalois extension. This construction extends
the one given by~\cite{fo-yz:gal-cor-hopf-galois} to the non-commutative case.
The Hopf algebra $L(A,H)$ is unique up to isomorphism. Its underlying algebra
is $(A\otimes A)^{co\,H}$ (under the codiagonal coaction of $H$~on~$A\otimes
A$),~\cite[see][Theorem~3.5]{ps:hopf-bigalois} for more details. We only
include here relevant parts of the whole Galois theory based on this
additional Hopf algebra. For closer acknowledgement we refer the reader to the
papers
of~\citet{ps:hopf-bigalois,ps:gal-cor-hopf-bigal,ps:hopf-galois-and-bigalois}
and also the work of~\citet{fo-yz:gal-cor-hopf-galois}. 
\begin{definition}
	We call $A/k$ a \textsf{$L$-$H$-biGalois extension} if $A$ is left $L$
	comodule algebra, $A/k$ is a left $L$-Hopf--Galois extension and $A$
	is a right $H$-comodule algebra such that $A/k$ is a right
	$H$-Hopf--Galois extension. Moreover, $A$ is supposed to be
	$L$-$H$-bicomodule so that both coactions commute.
\end{definition}
Let us define notions which plays an important role in a Galois connection
between $\qquot(L)$ and $\Sub_{\textit{alg}^H}(A)$ -- the complete lattice of
$H$-subcomodule algebras of $A$.

\begin{definition}\label{defi:admissible}
	Fix a coalgebra $C$ and its quotient coalgebra $C/I$ where $I$ is an
	coideal of $C$. The quotient coalgebra $C/I$ is \textsf{right (left)
	    admissible} if it is flat over $k$ (thus faithfully flat) and $C$
	is right (left) faithfully coflat over $C/I$. We call a coideal $I$ of
	$C$ \textsf{right (left) admissible} if $C/I$ is. A bialgebra or Hopf
	algebra quotient is admissible if it is admissible as a coalgebra.

	A subalgebra $B$ of $A$ is \textsf{right (left) admissible} if $A$ is
	faithfully flat over $B$ as right (left) module. In both cases
	\textsf{admissible} will mean left and right admissible.
\end{definition}
We refer to~\cite{ps:gal-cor-hopf-bigal} for more on admissibility of
subalgebras and quotients. 
\begin{proposition}[{\citet[Proposition 3.2 and Theorem 3.6]{ps:gal-cor-hopf-bigal}}]
	Let $A/k$ be a faithfully flat $L(H,A)$-$H$-biGalois extension of a
	ring $k$. Then there exists a \textsf{Galois connection}:
$$\qquot(L)\;\galois{\mathcal F}{\mathcal I}\;\Sub_{\textit{alg}^H}(A)$$ 
	such that $\mathcal F(L/I)=\,^{co\,L/I}A$ and $\mathcal
	I(B)=(A\otimes_B A)^{co\,H}$. If in addition the antipodes of $H$ and
	$L(H,A)$ are bijective then \textsf{admissible objects} are
	\textsf{closed elements} of the Galois connection $(\mc F,\mc I)$. The
	bijection between closed objects restricts to the admissible objects.
\end{proposition}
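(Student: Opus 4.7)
The plan is to derive this proposition as a specialisation of Galois Theory (Theorem~\ref{thm:main}) applied to $A$ viewed as a left $L$-comodule algebra. Since $A/k$ is $L$-Hopf--Galois we have $A^{co\,L}=k$, so Theorem~\ref{thm:main} provides a Galois connection
\[
\qquot(L)\galois{}{}\Sub_{\textit{alg}}(A/k)=\Sub_{\textit{alg}}(A),\qquad L/I\mapsto {}^{co\,L/I}A,
\]
and the task is to corestrict it to the subposet $\Sub_{\textit{alg}^H}(A)\subseteq\Sub_{\textit{alg}}(A)$ and then identify the resulting right adjoint with $B\mapsto(A\otimes_BA)^{co\,H}$.

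The first step is to verify that $\mathcal{F}(L/I):={}^{co\,L/I}A$ actually lies in $\Sub_{\textit{alg}^H}(A)$. This is a direct consequence of the bicomodule condition: if $\delta_L$ and $\delta_H$ commute on $A$, then for $a\in{}^{co\,L/I}A$ the element $a_{(0)}\otimes a_{(1)}\in A\otimes H$ satisfies $(\delta_L\otimes\id_H)(a_{(0)}\otimes a_{(1)})=\ov 1\otimes a_{(0)}\otimes a_{(1)}$ in $L/I\otimes A\otimes H$, so each $a_{(0)}$ is again $L/I$-coinvariant. Once $\mathcal F$ factors through $\Sub_{\textit{alg}^H}(A)\hookrightarrow\Sub_{\textit{alg}}(A)$ the corestricted map still reflects suprema, and Theorem~\ref{thm:exist} (equivalently Proposition~\ref{prop:iso}(3)) produces its unique right adjoint $\mathcal I$ with abstract formula $\mathcal I(B)=\bigvee\{L/I:B\subseteq{}^{co\,L/I}A\}$.

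The second step, and the real substance of the claim, is to identify this abstract supremum with $(A\otimes_BA)^{co\,H}$. Recall that as an algebra $L=(A\otimes A)^{co\,H}$ under the codiagonal $H$-coaction. For $B\in\Sub_{\textit{alg}^H}(A)$ the canonical surjection $A\otimes A\twoheadrightarrow A\otimes_BA$ is $H$-colinear; since $A/k$ is faithfully flat $H$-Hopf--Galois the functor $(-)^{co\,H}$ is exact on the relevant diagrams (this is the standard faithful coflatness trick), so it restricts to a surjection $L\twoheadrightarrow(A\otimes_BA)^{co\,H}$. One then checks that the kernel is a coideal and a left ideal of $L$ (coalgebra structure of $L$ is inherited from the tensor factors, and being killed by the second tensor leg is preserved under the $L$-multiplication which acts by multiplication in $A$ on the appropriate slot). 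Finally, the universal property of this quotient produces the required inequality $B\subseteq{}^{co\,L/\mathcal I(B)}A$, and faithful flatness shows that it is the maximal such $L/I$, matching the abstract supremum.

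The third step concerns closed elements. Proposition~\ref{prop:closed} already tells us that $L/I\in\qquot(L)$ is closed whenever ${}^{co\,L/I}A\subseteq A$ is $L/I$-Galois; when $L/I$ is right admissible this Galois property follows from the faithful coflatness assumption via the Doi--Takeuchi type argument used in Theorem~\ref{thm:newTakeuchi}. Symmetrically, for $B\in\Sub_{\textit{alg}^H}(A)$ left admissible, Remark~1.2 of~\cite{hs:normal-bases} (or its biGalois analogue) gives $B={}^{co\,L/\mathcal I(B)}A$, i.e.\ $B$ is closed. Bijectivity of the antipodes of $H$ and $L$ is exactly what lets one swap left/right faithful (co)flatness as needed in these two arguments. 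The restriction of the bijection between closed objects to admissible ones then follows from Proposition~\ref{prop:iso}(2).

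The main obstacle is the explicit identification $\mathcal I(B)=(A\otimes_BA)^{co\,H}$: both surjectivity of $L\twoheadrightarrow(A\otimes_BA)^{co\,H}$ and the coideal/left-ideal nature of its kernel rest on delicate (faithfully) (co)flat exchange arguments, and it is precisely here that the faithful flatness of $A/k$ (and ultimately of $A$ over $B$ in the admissible case) is used in an essential way; the rest of the proof is a fairly mechanical transport of Theorem~\ref{thm:main} and Proposition~\ref{prop:closed} through the biGalois dictionary.
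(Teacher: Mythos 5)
First, note that the paper itself does not prove this proposition: it is quoted from \citet[Proposition~3.2 and Theorem~3.6]{ps:gal-cor-hopf-bigal}, so there is no internal proof to compare with; your strategy of rederiving it from Theorem~\ref{thm:main} is in the spirit of the paper's closing remark that the $\qquot(L(H,A))$-correspondence is a special case of its Galois Theory, and your first step (corestriction of $\mathcal F$ to $\Sub_{\textit{alg}^H}(A)$ via the bicomodule condition, then Theorem~\ref{thm:exist}) is fine. The genuine gaps are exactly at the points that constitute Schauenburg's actual content. (i) The identification $\mathcal I(B)=(A\otimes_BA)^{co\,H}$: your argument that the kernel of $L\twoheadrightarrow(A\otimes_BA)^{co\,H}$ is a coideal rests on the claim that the coalgebra structure of $L(A,H)$ is ``inherited from the tensor factors''. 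This is not true: $A\otimes A$ carries no coalgebra structure, and $\Delta_{L}$ is defined through the translation map $h\mapsto\can^{-1}(1\otimes h)$ of the $H$-Galois extension $A/k$, so verifying that the kernel is a coideal (and on which side it is an ideal) is a nontrivial computation, not a formal inheritance. (ii) You only address one half of the Galois property, $B\subseteq{}^{co\,L/\mathcal I(B)}A$, and even that by an unspecified ``universal property''; the other half, $L/I\preceq\mathcal I\bigl({}^{co\,L/I}A\bigr)$ for every $L/I\in\qquot(L)$ --- equivalently your assertion that $(A\otimes_BA)^{co\,H}$ is the \emph{maximal} quotient with $B\subseteq{}^{co\,L/I}A$ --- is never argued; ``faithful flatness shows that it is the maximal such $L/I$'' is not a proof, and without it the explicit formula has not been matched with the abstract adjoint of Proposition~\ref{prop:iso}(3).

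(iii) In the admissibility step, before you can invoke \cite[Remark~1.2]{hs:normal-bases} to get $B={}^{co\,\mathcal I(B)}A$ for an admissible $B$, you must already know that the canonical map $A\otimes_BA\to\mathcal I(B)\otimes A$ for the left $L$-coaction is bijective, i.e.\ that $A/B$ is $\mathcal I(B)$-Galois; this is itself part of Schauenburg's Theorem~3.6 (it comes from the structure theorem for Hopf modules over the faithfully flat $H$-Galois extension $A/k$, plus the identification of that isomorphism with the canonical map) and cannot be imported as ``its biGalois analogue'' without proof. (iv) Finally, there is a hypothesis mismatch: the proposition is stated over a ring $k$, whereas Theorem~\ref{thm:main} and its proof are over a field (the proof chooses bases and complements and uses $\bigcap_i(A\otimes I_i)=A\otimes\bigcap_iI_i$), and likewise the completeness of $\qquot(L)$ is established in the paper over a field; so the route through Theorem~\ref{thm:main} does not, as written, cover the stated generality. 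In short, the architecture is reasonable, but the hard steps --- the coideal/ideal structure of $\ker\bigl(L\to(A\otimes_BA)^{co\,H}\bigr)$, the second Galois inequality, and the $Q$-Galois property needed for the admissible case --- are precisely what is missing, and they are the substance of the cited results of Schauenburg.
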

\noindent It is shown in~\cite[Corollary~3.6]{ps:hopf-bigalois} that the antipode of
$L(A,H)$ is bijective if the antipode of $H$ is bijective and $A/k$ is
faithfully flat.

Let us denote by $A^{op}$ the opposite algebra to an algebra $A$ (the
underlying vector space of $A^{op}$ is the same as $A$ but the multiplication
is precomposed with the flip of tensor factors subsequently denoted by $\tau$).
An opposite bialgebra $B^{op}$ has opposite multiplication and comultiplication
(i.e. $\Delta^{op}:=\tau\circ\Delta,\mu^{op}=\mu\circ\tau$). The opposite
bialgebra of a Hopf algebra is a Hopf algebra if and only if the antipode is
bijective.  Then $S^{op}=S^{-1}$.

\begin{theorem}
	Let $H$ be a Hopf algebra over a field $k$ with bijective antipode.
	Let $A/A^{co\,H}$ be a faithfully flat $H$-Hopf--Galois extension.
	Then the map of the Galois connection~\eqref{eq:Main-Theorem}
	\mbox{$\phi:\qquot(H)\mpr{}\Sub_{\textit{alg}}(A/A^{co\,H})$} $Q\mapsto
	A^{co\,Q}$ is \textsf{injective} on the set of (right, left) admissible
	quotients of $H$ and its image is in the set of (right, left)
	admissible subalgebras of $A$. 
\end{theorem}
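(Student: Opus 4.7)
The plan is to reduce both assertions to the following key lemma: whenever $Q\in\qquot(H)$ is right (resp.\ left) admissible, the intermediate extension $A^{co\,Q}\subseteq A$ is $Q$-Galois and $A$ is right (resp.\ left) faithfully flat over $A^{co\,Q}$. Granted this lemma, injectivity of $\phi$ on admissible quotients is immediate from Proposition~\ref{prop:mono}: if $Q_1,Q_2$ are admissible with $A^{co\,Q_1}=A^{co\,Q_2}$, then $A/A^{co\,H}$ being Hopf--Galois furnishes a surjective canonical map, and since both $A/A^{co\,Q_i}$ are $Q_i$-Galois the proposition delivers $Q_1=Q_2$. The second assertion, that $A^{co\,Q}$ lies in the admissible subalgebras, is precisely the faithful flatness part of the lemma.

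To prove the lemma I would use faithfully flat descent. The surjection $\pi\colon H\twoheadrightarrow Q$ induces the commutative square
\[
\begin{array}{ccc}
A\otimes_{A^{co\,H}}A & \xrightarrow{\;\can\;} & A\otimes H\\
\downarrow & & \downarrow\;\id\otimes\pi\\
A\otimes_{A^{co\,Q}}A & \xrightarrow{\;\can_Q\;} & A\otimes Q
\end{array}
\]
in which the top row is an isomorphism and the right vertical is surjective; hence $\can_Q$ is surjective. For injectivity, one cotensors with $Q$ over $H$: right faithful coflatness of $H$ over $Q$ reflects isomorphisms and identifies $\ker(\can_Q)$ with $\ker(\can)=0$. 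Faithful flatness of $A$ over $A^{co\,Q}$ then follows from the resulting isomorphism $A\otimes_{A^{co\,Q}}A\cong A\otimes Q$ combined with right faithful flatness of $A$ over $A^{co\,H}$ and right faithful coflatness of $H$ over $Q$ by a standard transfer argument. An alternative route is to pass through Schauenburg's biGalois apparatus: bijectivity of the antipode of $H$ forces that of $L=L(A,H)$ by~\cite[Corollary~3.6]{ps:hopf-bigalois}, each admissible $Q\in\qquot(H)$ corresponds via the $L$-$H$-bicomodule structure to an admissible $\tilde Q\in\qquot(L)$ with $\mathcal F(\tilde Q)=A^{co\,Q}$, and the cited result on the $(\mathcal F,\mathcal I)$ connection then both certifies the admissibility of $A^{co\,Q}$ and pins down $\tilde Q$, hence $Q$, uniquely.

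The main obstacle will be the injectivity of $\can_Q$: converting the purely coalgebraic datum of right (or left) faithful coflatness of $H$ over $Q$ into the ring-theoretic statement that $A\otimes_{A^{co\,Q}}A\to A\otimes Q$ has trivial kernel requires carefully interlacing the $A^{co\,H}$-module structure of $A$ with its $H$-comodule structure. The cleanest path, I expect, is to route the whole argument through the bicomodule algebra $A$ and its associated Hopf algebra $L(A,H)$, where the identifications ${}^{co\,\tilde Q}A=A^{co\,Q}$ transport faithful coflatness of quotients on the $H$-side to faithful flatness of subalgebras on the $L$-side. Once this transfer is in place, the present theorem becomes essentially a reformulation, on the $H$-side, of Schauenburg's Galois correspondence restricted to admissible objects.
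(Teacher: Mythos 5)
Your reduction is exactly the paper's: isolate the lemma that an admissible $Q$ makes $A^{co\,Q}\subseteq A$ a $Q$-Galois extension with $A$ faithfully flat over $A^{co\,Q}$, then get injectivity from Proposition~\ref{prop:mono} (the canonical map of $A/A^{co\,H}$ is surjective because the extension is Hopf--Galois) and get the admissibility of the image from the faithful flatness part. The difference is how the lemma is discharged, and that is where your write-up has a genuine gap. The paper does not prove the lemma by hand: it invokes Schneider's descent results \cite[Remark~1.4~(2) and Theorem~1.4~(2)]{hs:normal-bases} for left admissible $Q$, and handles right admissibility by passing to $H^{op}$ and $A^{op}$ (this is where bijectivity of the antipode is used). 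Your in-house sketch stalls precisely at the two points you flag yourself: the injectivity of $\can_Q$ (``cotensoring with $Q$ over $H$ reflects isomorphisms and identifies kernels'' is not an argument --- one must interlace the descent data of the faithfully flat $H$-Galois extension with coflatness of $H$ over $Q$, which is the actual content of Schneider's theorem) and the ``standard transfer argument'' for faithful flatness of $A$ over $A^{co\,Q}$, which is likewise not standard and is again the cited result. As written, the central lemma is asserted, not proved.

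Your fallback route through $L(A,H)$ does not close this gap either. It presupposes that every (right, left) admissible $Q\in\qquot(H)$ corresponds to an admissible $\tilde Q\in\qquot(L(A,H))$ with ${}^{co\,\tilde Q}A=A^{co\,Q}$; no such correspondence is established in Schauenburg's $(\mathcal F,\mathcal I)$ results you cite, which live entirely on the $L$-side, and in this paper the comparison between admissible quotients of $H$ and of $L(A,H)$ is Corollary~\ref{cor:equal}, i.e.\ a consequence of the very theorem you are proving, so leaning on it would be circular. The fix is simple and is what the paper does: quote \cite[Remark~1.4~(2), Theorem~1.4~(2)]{hs:normal-bases} for the lemma in the left admissible case, apply the opposite-algebra trick for the right admissible case, and then your Proposition~\ref{prop:mono} step finishes the proof exactly as you wrote it.
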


\begin{proof}
If $Q$ is left admissible ($H$ is faithfully coflat as left $Q$-comodule) then
by~\cite[Remark~1.4~(2)]{hs:normal-bases} $A$ is a $Q$-Galois extension.
Moreover, $A$ is faithfully flat as a left $A^{co\,Q}$-module
by~\cite[Theorem~1.4(2)]{hs:normal-bases} and thus $\phi(Q)=A^{co\,Q}$ is left
admissible. Now if $Q$ is right admissible then $Q^{op}$ is left admissible
for $H^{op}$ ($A^{op}$ is a left $H^{op}$-Galois extension) and by the same
reasoning as in~\cite[Theorem 1.4 (2)]{hs:normal-bases} we get that $A^{op}$
is a left faithfully flat $Q^{op}$-Galois extension. Then $A$ is a right
faithfully flat $Q$-Galois extension. Therefore, $A^{co\,Q}$ is right
admissible. By Proposition~\ref{prop:mono} the map $\phi$ is injective on the
set of (right, left) admissible quotients of $H$.
\end{proof}

\begin{corollary}\label{cor:equal}
	Let $k\subseteq A$ be a $L(H,A)$-$H$-biGalois extension of a field $k$,
	where $H$ is a Hopf algebra with a bijective antipode. The last two
	results show:
\begin{center}
\begin{tikzpicture}
\node[anchor=east] (A) at (0cm,0cm) {\begin{tabular}{ll}\textsf{(left, right) admissible}\\\textsf{quotients of $L(H,A)$}\end{tabular}}; 
\node[anchor=west] (B) at (3cm,0cm) {\begin{tabular}{ll}\textsf{(left, right) admissible }\\\textsf{$H$-comodule subalgebras of $A$}\end{tabular}};
\node[anchor=east] (C) at (0cm,-2cm) {\begin{tabular}{ll}\textsf{(left, right) admissible}\\\textsf{quotients of $H$}\end{tabular}};
\node[anchor=west] (D) at (3cm,-2cm) {\begin{tabular}{ll}\textsf{(left, right) admissible}\\\textsf{subalgebras of $A$}\end{tabular}};
\begin{scope}
	\draw[>->,>=angle 60,thick] 	(A) -- node[above]{$\simeq$} (B);
	\draw[>->,>=angle 60,thick] 	(C) -- (D);
\end{scope}
\node at (5cm,-1cm) {\rotatebox{-90}{$\subseteq$}};
\end{tikzpicture}
\end{center}
\end{corollary}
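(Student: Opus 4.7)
The plan is to assemble the corollary's diagram by invoking in sequence the two results it refers to, namely the Schauenburg proposition cited just above and the theorem proved immediately before the corollary. Since the corollary is packaged as a diagram with no new technical identity to verify between its two rows, the work consists almost entirely of lining up hypotheses and forgetful inclusions.

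First I would produce the top horizontal $\simeq$. This is a direct application of Schauenburg's Proposition (from~\cite{ps:gal-cor-hopf-bigal}) to the biGalois extension $k\subseteq A$: it asserts that the Galois connection $(\mathcal F,\mathcal I)$ between $\qquot(L(H,A))$ and $\Sub_{\textit{alg}^H}(A)$ restricts to a bijection on admissible objects. The hypotheses required are the faithful flatness of $A/k$ (built into the biGalois assumption) and the bijectivity of the antipodes of both $H$ and $L(H,A)$; the latter follows from the bijectivity of $S_H$ together with faithful flatness via~\cite[Corollary~3.6]{ps:hopf-bigalois}, as noted in the paragraph preceding the corollary.

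Next I would produce the bottom horizontal arrow. This is precisely the content of the theorem proved immediately above: the map $\phi:\qquot(H)\to\Sub_{\textit{alg}}(A/A^{co\,H})$, $Q\mapsto A^{co\,Q}$, is injective on admissible quotients and its image consists of admissible subalgebras. The key inputs into that theorem are Proposition~\ref{prop:mono} and the faithful flatness criterion from~\cite[Theorem~1.4]{hs:normal-bases}, which have already been established. The right-hand vertical $\subseteq$ is then the tautological forgetful inclusion of admissible $H$-comodule subalgebras into admissible subalgebras: admissibility is defined by one-sided faithful flatness of $A$ over the subalgebra, a property that does not depend on whether one additionally records the $H$-comodule structure, so no compatibility check is needed.

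The only ``obstacle'' is therefore bookkeeping: one must verify that the two notions of admissibility (on quotients of $L(H,A)$ versus quotients of $H$, and on $H$-subcomodule algebras versus plain subalgebras) are the same notion specialised to different lattices, and that the hypotheses of Schauenburg's proposition are genuinely implied by those of the corollary. No commutative-diagram identity relating the top and bottom rows is claimed by the statement; the assertion ``classifies the same subalgebras of $A$'' in the surrounding prose is the visual observation that both horizontal maps target subobjects of $A$ and are injective, which is exactly what the diagram records.
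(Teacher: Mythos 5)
Your proposal matches the paper's own argument: the corollary is proved there exactly by citing Schauenburg's proposition (with the antipode of $L(H,A)$ bijective via \cite[Corollary~3.6]{ps:hopf-bigalois}) for the top bijection, the immediately preceding theorem for the bottom injection, and the forgetful inclusion on the right. Your hypothesis bookkeeping is correct, so nothing is missing.
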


%

\section*{Acknowledgements}

We would like to thank T.~Brzezi\'{n}ski for reading our manuscript and giving
his comments and advices. 

\bibliographystyle{plainnat}
\bibliography{Mat}
\end{document}